\newtheorem{theorem}{Theorem}[section]
\newtheorem{lemma}[theorem]{Lemma}
\newtheorem{proposition}[theorem]{Proposition}
\newtheorem*{theorem*}{Theorem}
\theoremstyle{definition}
\newtheorem{definition}[theorem]{Definition}
\theoremstyle{remark}
\newtheorem{remark}[theorem]{Remark}
\numberwithin{equation}{section}
\newcommand{\C}{{\mathbb{C}}}
\newcommand{\R}{{\mathbb{R}}}
\newcommand{\Z}{{\mathbb{Z}}}
\renewcommand{\epsilon}{\varepsilon}
\renewcommand{\phi}{\varphi}
\renewcommand{\theta}{\vartheta}
\newcommand{\w}{\wedge}
\DeclareMathOperator{\id}{Id}
\DeclareMathOperator{\im}{Im}
\begin{document}
\title[Computational symplectic topology and RTBP]{Computational symplectic topology and symmetric orbits in the restricted three-body problem}

\author{Chankyu Joung}
\author{Otto van Koert$^{\ast}$}

\address{Department of Mathematical Sciences and Research Institute of Mathematics, Seoul National University\\
Building 27, room 439 and 402\\
San 56-1, Sillim-dong, Gwanak-gu, Seoul, South Korea\\
Postal code 08826 }
\address{$^{\ast}$Corresponding author: Otto van Koert, }
\email{okoert@snu.ac.kr}


\keywords{restricted three-body problem, global surface of section, computer-assisted proofs, symplectic geometry}

\begin{abstract}
In this paper, we propose a computational approach to proving the Birkhoff conjecture on the restricted three-body problem, which asserts the existence of a disk-like global surface of section. Birkhoff had conjectured this surface of section as a tool to prove existence of a direct periodic orbit.
Using techniques from validated numerics we prove the existence of an approximately circular direct orbit for a wide range of mass parameters and Jacobi energies. 
We also provide methods to rigorously compute the Conley-Zehnder index of periodic Hamiltonian orbits using computational tools, thus giving some initial steps for developing computational Floer homology and providing the means to prove the Birkhoff conjecture via symplectic topology.
We apply this method to various symmetric orbits in the restricted three-body problem.
\end{abstract}

\maketitle


\section{Introduction}
In his 1915 paper on the restricted three-body problem, Birkhoff proved the existence of the retrograde, periodic orbit in the bounded component of the restricted three-body problem.
He also conjectured the existence of a disk-like global surface of section, and hoped that this could be useful to prove the existence of a direct periodic orbit.
For small mass parameter $\mu$, such a disk-like surface of section was established by McGehee using perturbative means, \cite{mcgehee_phd_thesis_1969}. The binding orbit of this global surface of section is the so-called retrograde orbit found by Birkhoff. For small mass parameter, this orbit is unique and non-degenerate, a fact that was already known to Poincar\'e.
In the non-perturbative case, far less is known. The existence of a disk-like global surface of section has not yet been proved for more general mass parameters. 
If it exists, a potential binding orbit of this global surface of section would be the retrograde orbit, but even basic properties such as non-degeneracy and uniqueness of that orbit are unknown.

This brings us to symplectic geometry techniques.
In the last ten years, various results have been established that allow the usage of holomorphic curve methods from symplectic geometry for this classical problem. 
In particular, a result of Hofer, Wysocki and Zehnder, \cite{hofer_wysocki_zehnder_1998}, states that dynamically convex energy hypersurfaces in $\R^4$ admit disk-like global surfaces of section.
Hence one would hope that dynamical convexity can be proved and settle the Birkhoff conjecture this way.
Despite some progress in this direction, dynamical convexity has remained out of reach, but numerical experiments suggest that the retrograde orbit is the orbit with minimal action and minimal Conley-Zehnder index, namely Conley-Zehnder index equal to $3$. Proving this would be a step forward for the Birkhoff conjecture.

In this paper, we combine methods of validated numerics, \cite{tucker_validated_numerics_2011,kapela_capd_2021}, with techniques from symplectic dynamics in order to make progress on this problem.
Specifically, we shall prove the existence of a direct periodic orbit and establish several basic properties. We also study symmetric periodic orbits of small action and show that the retrograde orbit has minimal action. This is done using a covering argument. With additional computational effort, such an argument can also be used to prove dynamical convexity.

In order to state our results more precisely, let us define the restricted three-body problem.
First of all, we fix a parameter $\mu \in [0,1]$, called the {\bf mass parameter}. 
The phase space is $\R^2 \setminus \{ (-\mu,0), ~(1-\mu,0) \} \times \R^2$, which we identify with $\C \setminus \{ -\mu,~1-\mu \} \times \C$.
In our conventions, the Jacobi Hamiltonian of the planar, restricted $3$-body problem is then given by
\begin{equation}
\label{eq:RTBP_rot_unreg}
H=\frac{1}{2}|p|^2+q_1p_2-q_2p_1-\frac{1-\mu}{|q+\mu|}-\frac{\mu}{|q-1+\mu|}.
\end{equation}
Define $\Sigma_c:=H^{-1}(-c)$. The term $L:=q_1p_2-q_2p_1$ is called the angular momentum. 
We call $c$ the {\bf energy level}. 

The astronomy literature, for example page 49 from \cite{murray_dermott_solar_1999}, defines a retrograde spatial orbit as an orbit with inclination greater than 90 degrees, and spatial prograde orbit as an orbit with inclination less than 90 degrees. For the case of planar orbits, which we consider, this means that retrograde orbits move in the opposite direction to the motion of the primaries. A precise definition and characterization in terms of angular momentum is given in Section~\ref{sec:prelims}.

The flow of the Jacobi Hamiltonian is not complete, but it can be regularized. This is still helpful even though we are not interested in collision orbits, because
\begin{itemize}
    \item the regularized system has a better controlled numerical error;
    \item the search for periodic orbits does not get stuck in collision orbits, allowing us to obtain more complete results. 
\end{itemize}
We will describe Levi-Civita and Moser regularization in Section~\ref{sec:LC-regularization}.

For small mass parameter $\mu$, one can prove the existence of the direct orbit using integrability of the rotating Kepler problem as was already pointed out by Birkhoff, see page 327 of \cite{birkhoff_restricted_1915}. With a computer-assisted proof, we can also prove the existence in the non-perturbative case.
\begin{theorem}
\label{thm:direct_existence}
For all $(\mu,c)$ with $\mu \in[0.1,0.5]$ and Jacobi energy $c$ between and including the first critical energy and $2.1$, there is a symmetric, periodic orbit that is direct and crosses the $q_1$-axis exactly twice.
This orbit is non-degenerate.
\end{theorem}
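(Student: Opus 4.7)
The plan is to reduce the existence statement to a validated shooting problem and then discharge it by interval arithmetic on a finite covering of the parameter rectangle. The structural input is the anti-symplectic involution $\sigma(q_1,q_2,p_1,p_2)=(q_1,-q_2,-p_1,p_2)$, which satisfies $H\circ\sigma=H$ and whose fixed set $L=\{q_2=p_1=0\}$ is Lagrangian. A $\sigma$-symmetric periodic orbit that crosses the $q_1$-axis exactly twice corresponds to an arc of the Hamiltonian flow starting on $L\cap\Sigma_c$, returning to $L\cap\Sigma_c$ for the first time at some $T_*/2>0$, and closing up by the symmetry. Parametrizing $L\cap\Sigma_c$ by $q_1^0$, with $p_2^0$ determined up to sign by $H=-c$ and the sign chosen so that the initial velocity is direct in the sense of Section~\ref{sec:prelims}, this reduces the problem, for each fixed $(\mu,c)$, to finding a zero of the scalar shooting map
\[
F_{\mu,c}(q_1^0) := p_1\bigl(T_*(q_1^0);\,q_1^0,\mu,c\bigr),
\]
where $T_*(q_1^0)$ denotes the first positive time at which $q_2$ vanishes.

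I would cover the parameter rectangle $\{(\mu,c):\mu\in[0.1,0.5],\ c\in[c_1(\mu),2.1]\}$ by finitely many boxes $B_k$ and, above each $B_k$, guess a trial box $I_k\subset\R$ of initial values $q_1^0$. After applying the Levi-Civita or Moser regularization of Section~\ref{sec:LC-regularization} so that trajectories stay uniformly bounded away from collision, a validated ODE integrator of CAPD type produces interval enclosures of the flow and of its first variation on $I_k\times B_k$. Feeding these enclosures into an interval Newton or Krawczyk operator for the scalar equation $F_{\mu,c}=0$ certifies, box by box, the existence of a unique smooth graph $q_1^0(\mu,c)$ of axis crossings; pasting these local graphs yields the required family over the full rectangle.

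Once the axis crossing is certified, the remaining properties are finite sign and invertibility checks on enclosures the integrator has already produced. The enclosure of $q_2(t)$ for $t\in(0,T_*)$ must be verified strictly away from $0$, ensuring that the orbit crosses the $q_1$-axis exactly twice per period. Directness follows from a sign check of the angular momentum $q_1p_2-q_2p_1$ at $t=0$, where it reduces to $q_1^0 p_2^0$. For non-degeneracy I would integrate the full $4\times 4$ variational equation to a validated enclosure of the linearized half-return map $d\phi_{T_*/2}$, compose with $d\sigma$ to obtain the full monodromy $M$, and check that the enclosure of $\mathrm{tr}(M|_\xi)$ on the symplectic quotient $\xi=\ker dH/\R\cdot X_H$ does not contain $2$; this rules out $1$ as a transverse Floquet multiplier.

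The main obstacle is uniform verifiability across the whole parameter rectangle. Interval widths under a validated integrator grow roughly exponentially in the integration time and in the diameter of the initial-data box, so the interval Newton test requires a delicate balance between box size, for computational feasibility, and box smallness, for the test to pass. The hardest region is expected to be $c$ near the first critical energy $c_1(\mu)$, where the bounded Hill region is opening and nearly-tangential returns to $L$ can inflate enclosures dramatically, together with any neighborhood of $(\mu,c)$ where the first return time $T_*$ depends sensitively on parameters and therefore itself requires tight validated control. An adaptive subdivision scheme that refines whenever the Newton test marginally fails, seeded by continuation from the perturbative small-$\mu$ regime invoked in the introduction, should close the remaining gaps.
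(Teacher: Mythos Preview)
Your overall strategy coincides with the paper's: cover the parameter rectangle by small boxes, work in the Levi-Civita regularization, shoot from the fixed-point locus of the anti-symplectic involution, certify a zero of a scalar slope function on each box by validated integration (the paper uses an intermediate-value sign-change test rather than interval Newton, but either works), and verify non-degeneracy via the trace of the reduced $2\times 2$ monodromy as in Lemma~\ref{lemma:transverse_nondegeneracy}.

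Two points need correction. First, the directness check: in the sense of Section~\ref{sec:prelims}, an orbit is direct when the inequality $(q_1+\mu)p_2-q_2p_1<\mu(q_1+\mu)$ holds \emph{along the entire orbit}, not merely at $t=0$; moreover the relevant quantity is the angular momentum about the primary at $-\mu$, not $q_1p_2-q_2p_1$ about the origin. A sign check at the initial point does not suffice, and the paper instead encloses the trajectory between time steps and verifies inequality~\eqref{eq:retro_condition} throughout. Second, the full-period monodromy of a symmetric orbit is not obtained by composing the half-period linearization with $d\sigma$: that map sends $T_{x_0}$ to $T_{x_1}$ and is not an endomorphism. Using $\sigma\circ\phi_t=\phi_{-t}\circ\sigma$ one finds
\[
d\phi_{T}\big|_{x_0}=d\sigma\,\bigl(d\phi_{T/2}\big|_{x_0}\bigr)^{-1}\,d\sigma\,\,d\phi_{T/2}\big|_{x_0},
\]
and reducing this to the contact plane still requires the frame at both endpoints. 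The paper bypasses the issue by integrating the variational equation over the full period and applying the frame reduction~\eqref{eq:reduced_monodromy} directly.
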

The specific choice of Jacobi energy $2.1$ as largest energy parameter is somewhat arbitrary, and was chosen since computer-assisted techniques require, roughly speaking, a compact set of parameters and this parameter is larger than the critical energy for all $\mu$.
Conley has proved the existence of direct and retrograde periodic orbits for $c \gg 2$ in \cite{conley_long_1963}.
This makes it plausible that there is a direct periodic orbit for all Jacobi energies below the critical value. However, Conley's proof is perturbative, so we cannot draw this conclusion without additional work.

{
To state our result concerning the Birkhoff conjecture and dynamical convexity, we define the action of a periodic Hamiltonian orbit by
$$
\mathcal A_{\mu,c}(\gamma)=\int_{\gamma} \lambda,
$$
where $\lambda$ is the Liouville form, which we choose as $\lambda=-q\cdot dp=p\cdot dq-d(p\cdot q)$. We include the parameters $\mu$ and $c$ because the regularized Hamiltonian explicitly depends on these.
The Conley-Zehnder index of a periodic Hamiltonian orbit will defined in Section~\ref{sec:degree_def_CZ}; roughly speaking, it can be thought of as a winding number of the linearized flow and plays an important role in Floer theory, see Remark~\ref{rem:bifurcation_floer}.
Together with the self-linking number, these invariants play a role in proving the Birkhoff conjecture \cite{hryniewicz_salomao_tight_3_sphere_2011,hryniewicz_fast_2012,hryniewicz_salomao_wysocki_genes_zero_2023}.
}
We have the following partial result.
\begin{theorem}
\label{thm:small_action_unknotted_and_sl}
For every $\mu \in [0, 0.5]$ and $c\in [2.1, 2.1 +10^{-6}]$, there are $A_{\mu,c}>0$, a retrograde symmetric periodic orbit $\gamma^r_{\mu,c}$ and a direct symmetric periodic orbit $\gamma^d_{\mu,c}$ such that 
\begin{enumerate}
    \item symmetric periodic orbits with action $\mathcal A_{\mu,c}$ less than $A_{\mu,c}$ are reparametrizations of $\gamma^r_{\mu,c}$ or $\gamma^d_{\mu,c}$; 
    \item the orbits $\gamma^r_{\mu,c}$ and $\gamma^d_{\mu,c}$ are non-degenerate, unknotted and have self-linking number $-1$ in the Levi-Civita regularization;
    \item in the Moser regularization the orbits $\gamma^r_{\mu,c}$ and $\gamma^d_{\mu,c}$ have Conley-Zehnder index $1$ and $3$, respectively;
    \item in the Levi-Civita regularization the orbits $\gamma^r_{\mu,c}$ and $\gamma^d_{\mu,c}$ have Conley-Zehnder index $3$ and $5$, respectively.
\end{enumerate}
\end{theorem}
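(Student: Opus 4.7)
\emph{Strategy and existence.} My plan is to reduce everything to a finite, computer-checkable problem on the fixed-point set $F:=\Sigma_c\cap\{q_2=p_1=0\}$ of the anti-symplectic involution $\sigma:(q_1,q_2,p_1,p_2)\mapsto(q_1,-q_2,-p_1,p_2)$, and then discharge each step with validated interval integration. Any symmetric periodic orbit meets this one-dimensional locus in two points joined by a half-period of the flow, and on a fixed energy level an upper bound on the action gives an upper bound $T=T(A_{\mu,c})$ on that half-period. Existence and non-degeneracy of $\gamma^r_{\mu,c}$ and $\gamma^d_{\mu,c}$ come from interval-Newton iterations on the symmetric return map, initialized at the Birkhoff/McGehee retrograde approximation and at the direct orbit produced by the same computer-assisted procedure as in Theorem~\ref{thm:direct_existence} (adapted to cover both the thin energy slice $c\in[2.1, 2.1+10^{-6}]$ and the small-$\mu$ regime $\mu\in[0,0.1]$ via the rotating Kepler limit).

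\emph{Uniqueness via covering (item~(1)).} Subdivide $F$ into finitely many closed sub-intervals $I_k$. Working in Levi-Civita coordinates, to prevent interval blow-up near the primaries, I flow each $I_k$ by a rigorous integrator for all times $t\in[0,T]$, recording validated enclosures of every return to $\{q_2=p_1=0\}$. Intervals whose enclosure contains $\gamma^r_{\mu,c}$ or $\gamma^d_{\mu,c}$ (already localized by the interval-Newton step) are pruned; on every surviving $I_k$ I verify that no further return can occur before the accumulated action reaches $A_{\mu,c}$, and refine any $I_k$ whose enclosure is inconclusive. The threshold $A_{\mu,c}$ is chosen slightly above $\max(\mathcal{A}(\gamma^r_{\mu,c}),\mathcal{A}(\gamma^d_{\mu,c}))$, both of which are supplied by the interval-Newton step.

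\emph{Items~(2)--(4).} Non-degeneracy is read off from the same validated integration of the variational equation along $\gamma^r$ and $\gamma^d$: the symmetric linearized return map is enclosed in a $2\times 2$ interval matrix disjoint from the set of matrices having $+1$ as an eigenvalue. Unknottedness and self-linking number $-1$ in the Levi-Civita regularization: the validated flow gives an explicit tubular neighborhood of each orbit, a symmetric Seifert disk is built using the involution $\sigma$, and the self-linking is computed as the writhe relative to a nowhere-vanishing section of the contact structure produced by the variational equation, with the resulting integer certified by the final interval width. For the Conley--Zehnder indices I apply the Robbin--Salamon/winding-number algorithm to the validated path of symplectic matrices produced above; the Moser and Levi-Civita values for small orbits linking the binding differ by the standard covering correction, which gives a numerical cross-check between items~(3) and~(4).

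\emph{Main obstacle.} The hardest step is the covering in item~(1): the subdivision of $F$ must be fine enough that the $T$-time flow enclosures remain useful, yet coarse enough for the verification to terminate, and this balance has to hold uniformly over the parameter box $(\mu,c)\in[0,0.5]\times[2.1, 2.1+10^{-6}]$. Returns tangent to $\{q_2=p_1=0\}$ and trajectories that graze the primaries are the most sensitive, so adaptive refinement in both phase space and parameter space is essential; the preference for Levi-Civita over Moser regularization is precisely to exploit the polynomial form of the regularized vector field over the timescales needed here.
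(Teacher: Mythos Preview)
Your approach to items~(1), (3) and (4) is essentially the paper's: a covering argument on the fixed-point locus in Levi--Civita coordinates with action tracked as an extra variable, existence and non-degeneracy via an interval shooting/Newton step, and Conley--Zehnder indices read off from a validated path of $2\times 2$ symplectic matrices using a winding-number formula. The paper streamlines the index computation by first establishing that the retrograde and direct orbits form non-degenerate families over the whole parameter box, so the index is constant and needs to be certified only once; you would instead certify it box by box, which is more work but not a different idea.

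The genuine gap is in item~(2), the unknottedness and self-linking number. Your plan --- ``a symmetric Seifert disk is built using the involution $\sigma$, and the self-linking is computed as the writhe relative to a nowhere-vanishing section of the contact structure produced by the variational equation'' --- does not amount to a method. Symmetry under $\sigma$ does not by itself furnish a spanning disk, and there is no rigorous validated-numerics routine for certifying the knot type of an embedded circle in $S^3$ short of exhibiting such a disk. Likewise, the variational equation produces the linearized flow, not a trivialization of the contact structure; and ``writhe'' is not the relevant quantity for a transverse knot. Certifying $s\ell=-1$ directly would require enclosing a pushoff along a global frame of $\xi$ (the paper uses the quaternionic frame $J\nabla K,\,K\nabla K$) and then rigorously computing a linking number, which is itself a nontrivial validated computation you have not described.

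The paper avoids all of this with an isotopy argument: the non-degeneracy and local uniqueness already established in item~(1) imply that $\gamma^r_{\mu,c}$ and $\gamma^d_{\mu,c}$ depend smoothly on $(\mu,c)$, hence form isotopies of transverse knots in $\Sigma_{\mu,c}\cong S^3$. Unknottedness and self-linking number are invariant under transverse isotopy, so it suffices to know them at the single endpoint $\mu=0$, where McGehee's disk-like global surface of section for the rotating Kepler problem shows both orbits are unknotted binding orbits with $s\ell=-1$. This reduction to a known integrable case is the key idea you are missing.
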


This theorem is proved by covering the fixed point locus of an anti-symplectic involution with sufficiently small sets for which either existence or non-existence of a short-action periodic orbit can be shown using computer-assisted techniques. The non-degeneracy and index computation can be established afterwards by investigating the linearized flow.
In principle, this method could be used to cover the entire connected component of the energy hypersurface to check for all periodic orbits up to a given action bound. Together with a bound on the growth rate of the index, which can also be obtained from a covering argument, this would be enough to check dynamical convexity. 

In view of the following result of Hryniewicz, see Theorem~1.5 from \cite{hryniewicz_fast_2012}, we expect the retrograde and direct orbits to be binding orbits for a disk-like surface of section in this range of parameters.
\begin{theorem}[Hryniewicz]
\label{thm:Hryniewicz_gss}
Consider a dynamically convex, starshaped hypersurface $\Sigma \subset \mathbb{C}^2$ and suppose that $\gamma$ is an unknotted periodic Reeb orbit with self-linking number equal to $s\ell(\gamma)=-1$. 
Then $\gamma$ bounds a disk-like global surface of section.
\end{theorem}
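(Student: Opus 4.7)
The plan is to produce the required disk-like global surface of section by constructing, via pseudo-holomorphic curves in the symplectization $\R\times\Sigma$, an open book decomposition of $\Sigma$ with binding $\gamma$. This is the strategy originated by Hofer, Wysocki, and Zehnder for the standard contact three-sphere, and Hryniewicz's result may be viewed as its extension to the setting where a candidate binding is identified topologically rather than dynamically.

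First, I would equip $\Sigma$ with the restriction $\lambda$ of the Liouville form on $\C^2$, assume non-degeneracy up to a generic perturbation, and choose a $d\lambda$-compatible almost complex structure $J$ on $\xi:=\ker\lambda$. Because $\gamma$ is unknotted with $s\ell(\gamma)=-1$, a spanning disk $D\subset\Sigma$ provides a trivialization of $\xi|_\gamma$ whose Seifert framing agrees with the contact framing; in this trivialization the Conley-Zehnder index is intrinsically defined and, by dynamical convexity, satisfies $\mu_{CZ}(\gamma)\geq 3$. The standard Fredholm index formula then gives that the moduli space $\mathcal{M}$ of finite energy planes in $\R\times\Sigma$ asymptotic to $\gamma$ has expected dimension $\mu_{CZ}(\gamma)-1\geq 2$, so $\mathcal{M}/\R$ is a one-manifold, exactly what a foliation by pages requires.

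Next, I would produce at least one element of $\mathcal{M}$ via a Bishop family: construct small $J$-holomorphic disks with boundary on $D$ near $\gamma$, and extract a bubbling-off limit to obtain a finite energy plane $\tilde{u}_0$ asymptotic to $\gamma$. Continuing $\tilde{u}_0$ through $\mathcal{M}$ and invoking SFT compactness identifies $\mathcal{M}/\R$ with $S^1$, with dynamical convexity precluding breakings into buildings whose extra components would need asymptotic limits of Conley-Zehnder index $\leq 1$. Siefring-Wendl intersection theory, together with $s\ell(\gamma)=-1$, then shows that distinct planes in $\mathcal{M}/\R$ are disjointly embedded in $\Sigma\setminus\gamma$, so their projections give a smooth foliation of $\Sigma\setminus\gamma$ by open disks. $J$-compatibility plus embeddedness of the projections forces the Reeb vector field to be positively transverse to each page, and together with the binding $\gamma$ this yields the desired open book, with each page a disk-like global surface of section.

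The main obstacle is the compactness and bubbling analysis: one must rule out degenerations of the family into buildings whose components are asymptotic to unwanted Reeb orbits or develop interior nodes. Dynamical convexity is the essential ingredient, since the uniform Conley-Zehnder lower bound forbids the appearance in any bubble tree of orbits that could serve as limits of degenerating planes. Making this precise requires the full SFT compactness machinery in $\R\times\Sigma$, combined with the intersection-theoretic constraints on embedded curves asymptotic to $\gamma$ inherited from the topological condition $s\ell(\gamma)=-1$.
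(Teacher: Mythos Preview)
The paper does not prove this theorem at all: it is quoted as Theorem~1.5 from \cite{Hryniewicz} and used as a black box. There is therefore no proof in the paper to compare your proposal against.

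That said, your outline is a reasonable high-level sketch of the pseudo-holomorphic curve strategy underlying Hryniewicz's result, with one substantive caveat. Hryniewicz's argument does not proceed via a Bishop family along a spanning disk as you describe; rather, the key technical notion is that of a \emph{fast} finite-energy plane (a plane whose asymptotic behaviour is governed by the extremal negative eigenvalue of the asymptotic operator), and the existence of at least one such plane asymptotic to $\gamma$ is obtained by a separate argument involving the HWZ foliation theory for dynamically convex hypersurfaces. The compactness and foliation arguments then run within the moduli space of fast planes, which has better compactness properties than the full moduli space. Your Bishop-family step would need to be replaced by this input, and the intersection theory you invoke is closer to Hryniewicz's own asymptotic analysis than to Siefring--Wendl, which postdates the result.
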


\begin{remark}
    Upcoming work of Bowen Liu and the second author, \cite{liu_van_Koert_convexity_inpreparation}, will prove convexity of the Levi-Civita embedding for a wide range of parameters, including $\mu=1/2$ and $c=2.1$. In combination with the result of Hryniewicz, this shows that the retrograde and direct orbits from Theorem~\ref{thm:small_action_unknotted_and_sl} both bind a disk-like global surface of section. In order to give a complete application of computational symplectic topology to this problem, we include an appendix, Section~\ref{sec:appendix}, where we prove relevant convexity results.
\end{remark}

\begin{theorem}
\label{thm:gss}
    For every $\mu \in [0, 0.5]$ and $c\in [2.1, 2.1+10^{-6}]$, the compact component of the Levi-Civita regularization is convex. In particular, the retrograde and direct orbits from Theorem~\ref{thm:small_action_unknotted_and_sl} both bind a disk-like global surface of section for these parameter values.
\end{theorem}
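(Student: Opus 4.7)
The plan is to split the statement into two parts: first, prove that the compact component $\Sigma_{\mu,c}$ of the Levi-Civita regularized energy hypersurface is geometrically convex as a subset of $\R^4$; second, deduce the existence of the disk-like global surface of section from geometric convexity together with Theorem~\ref{thm:small_action_unknotted_and_sl}. The second step is essentially automatic: strict geometric convexity of a smooth starshaped hypersurface in $\R^4$ implies dynamical convexity by Hofer--Wysocki--Zehnder, while Theorem~\ref{thm:small_action_unknotted_and_sl} already supplies $\gamma^r_{\mu,c}$ and $\gamma^d_{\mu,c}$ as unknotted orbits of self-linking $-1$ in the Levi-Civita regularization, so Hryniewicz's theorem as stated in the excerpt applies directly.

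For the first and main step, I would start by writing the Levi-Civita regularized Hamiltonian explicitly. Substituting $q + \mu = u^2$ and $p = v/(2\bar u)$ in \eqref{eq:RTBP_rot_unreg} and multiplying the shifted Hamiltonian $H + c$ by $2|u|^2$ yields a real-analytic function $K_{\mu,c}$ on $\R^4 \setminus \{u^2 = 1\}$ whose zero level set has a compact component $\Sigma_{\mu,c}$ containing the origin $u=0$. Convexity of the region enclosed by $\Sigma_{\mu,c}$ is equivalent to positive semi-definiteness of $\mathrm{Hess}(K_{\mu,c})$ restricted to the tangent hyperplane $\ker\, dK_{\mu,c}$ at every point of $\Sigma_{\mu,c}$; I would aim for strict positive definiteness so as to obtain strict convexity, which in turn yields strict dynamical convexity.

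The rigorous verification of this pointwise Hessian inequality would be carried out by a validated-numerics covering argument, similar in spirit to the coverings used elsewhere in the paper. Concretely, I would fix an a priori rectangular enclosure of $\Sigma_{\mu,c}$ in $\R^4$ that is valid uniformly for $(\mu,c) \in [0,0.5] \times [2.1, 2.1+10^{-6}]$, subdivide it into interval subboxes, and for each subbox whose enclosure of $K_{\mu,c}$ straddles $0$, verify by interval arithmetic that the bordered Hessian of $K_{\mu,c}$ has the correct sign pattern. Equivalently, one certifies that the three eigenvalues of the restriction of $\mathrm{Hess}(K_{\mu,c})$ to an interval enclosure of an orthonormal basis of the tangent plane are uniformly bounded below by a positive constant. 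Adaptive subdivision refines the covering where needed, and the parameters $(\mu,c)$ are treated as additional interval variables so that a single covering certifies convexity across the entire parameter range.

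The main obstacle is engineering the covering sharply enough that the verification terminates in reasonable time while remaining rigorous. Two regions are particularly delicate. Near the regularized primary $u=0$, the gradient $\nabla K_{\mu,c}$ is small and generic interval bounds on the restricted Hessian are loose, so I would replace the brute-force interval check there by an analytic argument using the explicit quadratic model of $K_{\mu,c}$ at the origin. Near the image $u^2 = 1$ of the non-regularized primary, the factor $1/|u^2-1|$ appearing in $K_{\mu,c}$ makes interval enclosures of the Hessian grow as $\mu \nearrow 1/2$, even though $\Sigma_{\mu,c}$ itself stays away from $u^2 = 1$ for $c$ below the first critical value; locally refined enclosures will be needed in a neighbourhood of that region. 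Combining analytic local control near $u=0$, fine interval enclosures near $u^2=1$, and a coarser covering in the bulk should bring the computation within reach and complete the proof of convexity; the remainder of the theorem then follows from the deductions recorded in the first paragraph.
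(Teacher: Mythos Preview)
Your proposal is correct and follows essentially the same route as the paper: a validated-numerics covering of $\Sigma_{\mu,c}$ checking positivity of the tangential Hessian, supplemented by an analytic argument at the collision locus $z=0$, and then the deduction via Hofer--Wysocki--Zehnder and Hryniewicz exactly as you describe. Two implementation points where the paper streamlines what you sketch: (i) rather than certifying positive-definiteness box by box (via bordered Hessian or eigenvalue bounds), the paper only verifies that the \emph{determinant} of the $3\times 3$ tangential Hessian, formed with the quaternionic frame $I\nabla K, J\nabla K, K\nabla K$, is positive on every box, and then uses connectedness of $\Sigma_{\mu,c}$ together with the analytic positive-definiteness at $z=0$ to rule out sign changes in any eigenvalue; this replaces your multi-eigenvalue check by a single scalar inequality. (ii) Your worry about the region near $u^2=1$ is handled not by local refinement but by an a priori bound $2|z|^2 \le 3/5$ (from an elementary estimate on the Hill's region for $c\ge 2.1$) together with $|w|\le 2$, which keeps the covering uniformly away from the singularity and bounds the box count. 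Your remark that the gradient is small near $u=0$ is not quite the issue---at $z=0$ one has $|\nabla K|=|w|=\sqrt{1-\mu}$---but the analytic lemma there is still needed, as the anchor for the connectedness argument.
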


The structure of this paper is as follows.
In Section~\ref{sec:prelims} we give background and give a definition of retrograde and direct orbits that is compatible with what is commonly used in astronomy. We include a brief discussion on regularizations, review the Conley-Zehnder index and develop some new formulas to compute this index, which are computationally robust and can be used in validated numerics.
In Section~\ref{sec:proofs} we state some additional results on properties of the retrograde and direct orbits and prove the theorems. A proof of the convexity properties can be found in the appendix.
The computational part is separate and can be found on \url{https://github.com/ckjoung/soir}.

\section{Preliminaries}
\label{sec:prelims}
We start by recalling how to derive the Jacobi Hamiltonian since we will need the transformations from this derivation below. 
In fixed (or sidereal) coordinates $(\bar q, \bar p)$, the time-dependent Hamiltonian for the restricted three-body problem is given by
$$
H_s=\frac{1}{2}|\bar p|^2-\frac{1-\mu}{|\bar q-\bar e(t)|}-\frac{\mu}{|\bar q-\bar m(t)|},
$$
where the heavy primaries $\bar e(t)$, ``Earth'', and $\bar m(t)$, ``Moon'', are given by
$$
\bar e(t)=\left(
\begin{array}{c}
-\mu \cos(t) \\
\mu \sin(t)
\end{array}
\right)
\quad
\bar m(t)=\left(
\begin{array}{c}
(1-\mu) \cos(t) \\
-(1-\mu) \sin(t)
\end{array}
\right)
$$
In other words, the two heavy primaries move around each other in a clock-wise motion in our conventions.
We will shorten these formulas by defining the time-dependent rotation $R_t$,
$$
R_t
=
\left(
\begin{array}{cc}
\cos(t) & -\sin(t) \\
\sin(t) & \cos(t)
\end{array}
\right)
.
$$
So $\bar e(t)=R_{-t} (-\mu,0)$ and $\bar m(t)=R_{-t}(1-\mu,0)$. The sidereal coordinates and rotating coordinates are related by the formula
$$
\bar q(t)=R_{-t} q(t),\quad \bar p(t)=R_{-t} p(t).
$$
By noting that the symplectic transformation of $T^*\R^2$ defined by $\psi(t)=R_t\oplus R_t$ is generated by $q_1 p_2 -q_2p_1$ and applying the composition formula for Hamiltonians, we see that the time $t$-flow of $H$, which we denote by $Fl^{X_{H}}_t$, satisfies
$$
Fl^{X_{H}}_t = \psi(t) \circ Fl^{X_{H_s}}_t.
$$
This means that the Hamiltonian~\eqref{eq:RTBP_rot_unreg} generates the flow of $H_s$ but in rotating coordinates.\footnote{
With the standard symplectic form $\omega =dp \w dq$, we define the Hamiltonian vector field of $H$ via the equation $i_{X_H}\omega =-dH$; the flow equations are then given by $\dot q =\frac{\partial H}{\partial p},~\dot p =-\frac{\partial H}{\partial q}$.
}

Following standard conventions, we define (spatial) retrograde orbits as orbits whose inclination is greater than 90 degrees, and prograde orbits as orbits with an inclination of less than 90 degrees.
Since we will be considering planar orbits, this means intuitively that retrograde orbits move in the opposite direction to the motion of the primaries.
To be more precise, we give the following definition.
{
\begin{definition}
We call a path $\bar q(t)$ in $\R^2$ with sidereal coordinates {\bf retrograde} with respect to $\bar e(t)$ if the angle
$$
\phi:=\arg
\left(
\bar q_1(t) - \bar e_1(t) +i(\bar q_2(t) - \bar e_2(t) )
\right)
$$
is increasing and {\bf prograde} (or {\bf direct}) if this angle is decreasing. If $q(t)$ is a path in rotating coordinates, then we call it retrograde or direct with respect to $e(t)$ if the underlying path in sidereal coordinates is retrograde or direct, respectively.
\end{definition}
}

This definition depends on our conventions that the primaries move around each other in a clock-wise fashion.
With a computation we get the more easily checked condition in rotating coordinates,
\begin{proposition}
Suppose that $(q(t),p(t) )$ is an orbit of $H$ in rotating coordinates. Then the curve $q(t)$ is retrograde with respect to $e(t)$ if
\begin{equation}
\label{eq:retro_condition}
    (q_1+\mu)p_2 -q_2p_1 > \mu (q_1+\mu).
\end{equation}
The orbit is direct if $(q_1+\mu)p_2 -q_2p_1 < \mu (q_1+\mu)$.
\end{proposition}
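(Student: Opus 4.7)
The plan is to compute $\dot\phi$ directly by relating the sidereal angle to a rotating-frame angle and then substituting Hamilton's equations for the Jacobi Hamiltonian~\eqref{eq:RTBP_rot_unreg}.

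First I would exploit the transformation formulas already set up in this section. Since $\bar q(t) = R_{-t} q(t)$ and $\bar e(t) = R_{-t}(-\mu,0)$, the difference vector satisfies
\begin{equation*}
\bar q(t) - \bar e(t) = R_{-t}\bigl(q(t) - (-\mu,0)\bigr) = R_{-t}(q_1+\mu,\,q_2).
\end{equation*}
Writing $(q_1+\mu) + i q_2 = \rho\, e^{i\theta}$ in the rotating frame, applying $R_{-t}$ corresponds to multiplication by $e^{-it}$, so the sidereal argument is $\phi = \theta - t$. Hence $\dot\phi > 0$ is equivalent to $\dot\theta > 1$.

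Next I would compute $\dot\theta$ in the standard way,
\begin{equation*}
\dot\theta \;=\; \frac{(q_1+\mu)\,\dot q_2 \;-\; q_2\,\dot q_1}{(q_1+\mu)^2 + q_2^2},
\end{equation*}
and substitute Hamilton's equations for the Hamiltonian~\eqref{eq:RTBP_rot_unreg}. With the sign conventions stated in the footnote, a short calculation gives $\dot q_1 = p_1 - q_2$ and $\dot q_2 = p_2 + q_1$, so the numerator of $\dot\theta$ equals
\begin{equation*}
(q_1+\mu)p_2 - q_2 p_1 + q_1^2 + \mu q_1 + q_2^2.
\end{equation*}

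Finally, the inequality $\dot\theta > 1$ becomes
\begin{equation*}
(q_1+\mu)p_2 - q_2 p_1 + q_1^2 + \mu q_1 + q_2^2 \;>\; (q_1+\mu)^2 + q_2^2,
\end{equation*}
and expanding $(q_1+\mu)^2 = q_1^2 + 2\mu q_1 + \mu^2$ reduces this to $(q_1+\mu)p_2 - q_2 p_1 > \mu(q_1+\mu)$, which is exactly~\eqref{eq:retro_condition}. The direct case is the reverse inequality. The only place where real care is needed is keeping track of the sign of the rotation $R_{-t}$ versus $R_t$ and the sign conventions for the Hamiltonian vector field; once those are pinned down, the rest is routine.
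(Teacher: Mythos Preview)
Your argument is correct and is essentially the same as the paper's proof: both compute $\dot\phi$ by passing to the rotating frame, substitute Hamilton's equations $\dot q_1=p_1-q_2$, $\dot q_2=p_2+q_1$, and simplify to obtain $(q_1+\mu)p_2-q_2p_1>\mu(q_1+\mu)$. The only cosmetic difference is that you introduce the auxiliary rotating-frame angle $\theta$ and reduce to $\dot\theta>1$, whereas the paper works directly with $\dot\phi$ via the matrix expression $(\bar q-\bar e)^t J\,\frac{d}{dt}(\bar q-\bar e)/\|\bar q-\bar e\|^2$ and arrives at $\dot\phi=-1+\dot\theta$; the resulting algebra is identical.
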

We note that the left-hand side is the angular momentum with respect to $(-\mu,0)=\bar e(0)$, which we can write as
$$
L_\mu = (q+e(0)) J p=
\left(
\begin{array}{c}
q_1+\mu \\
q_2
\end{array} 
\right)^t
\left(
\begin{array}{cc}
0 & 1 \\
-1 & 0
\end{array} 
\right)
\left(
\begin{array}{c}
p_1 \\
p_2
\end{array} 
\right)
$$

\begin{proof}
We have $\bar q(t)=R_{-t}q(t)$, and $\bar e(t)=R_{-t} (-\mu,0)$, so taking the derivative of $\phi$, we find using the equations of motion
\[
\begin{split}
\dot \phi &= \frac{(\bar q-\bar e)^t J \frac{d}{dt} (\bar q -\bar e)}{\Vert \bar q -\bar e \Vert^2}
= \frac{( q- e)^t R_{-t}^t J \frac{d}{dt} R_{-t}(q -e)}{\Vert q -e \Vert^2} \\
&=-1 + \frac{( q- e)^t J \frac{d}{dt}q}{\Vert q -e \Vert^2}
=-1 +\frac{(q_1+\mu)p_2 +q_1^2 +\mu q_1 -q_2 p_1 +q_2^2}{\Vert q -e \Vert^2} \\
&=\frac{L_\mu-\mu q_1 -\mu^2}{\Vert q -e \Vert^2},
\end{split}
\]
from which the conclusion follows.
\end{proof}

To indicate what this definition means, we plot in Figure~\ref{fig:retrograde} the symmetric periodic orbits for $\mu=0.9$ found by the Birkhoff shooting procedure, which are retrograde in the sense of Birkhoff. We also plot the quantity $L_\mu - \mu q_1 - \mu^2$ as a function of time, whose sign indicates whether the angle $\phi$ is increasing or decreasing.
We see from the plot that the periodic orbit at Jacobi energies including $c=1.2$ and $c=1.0$ are not retrograde with respect to the primary on the left (the light primary), despite going around it in counter clock-wise fashion. Put simply, the movement of the primary is too fast at some times, making the apparent movement as seen from the light primary prograde.

\begin{figure}[!t]
    \centering
    \includegraphics[width=0.8\textwidth]{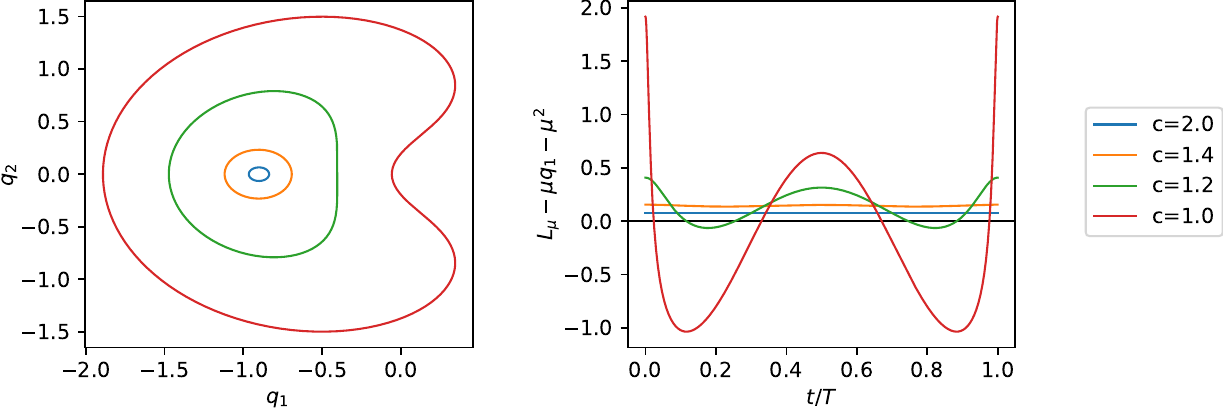}
    \caption{Retrograde orbits and orbits that are not apparently retrograde for $\mu=0.9$}
    \label{fig:retrograde}
\end{figure}

\begin{remark}
    In his paper \cite{birkhoff_restricted_1915}, Birkhoff does not give a definition of a retrograde orbit, but uses the term to refer to ``upward'' going orbits on the positive real axis, i.e.~orbits whose initial points are of the form $(q_1,0;0,p_2)$ with $p_2>0$. The definition we give here is a priori a stronger requirement, but for Jacobi energies considered in this paper, the Birkhoff orbit is also retrograde in the astronomical sense.
\end{remark}

\subsection{Hill's region}
We consider RTBP with mass parameter $\mu \in (0,1)$.
The corresponding Hamiltonian $H$ has five critical points, which correspond to the five critical points of the effective potential, given by
$$
U=-\frac{1}{2}|q|^2 -\frac{1-\mu}{|q+\mu|}
-\frac{\mu}{|q+\mu-1|}.
$$
The five critical points of $U$ are called Lagrange points and we write them as $\ell_1,\ldots,\ell_5$ and the corresponding critical points of $H$ by $L_1,\ldots,L_5$.
All these critical points are non-degenerate, where $L_1,\ldots, L_3$ are critical points of Morse index $1$, while the remaining two critical points have index $2$. 
Unless indicated otherwise, we will assume throughout this paper that $-c \leq H(L_1)$, where $L_1$ is the critical point with the smallest energy value.

The topology of the levels of $U$ (and accordingly of $H$) changes as $-c$ passes critical values. A sketch of these level sets is shown in Figure~\ref{fig:hillsregions}.
The closure of the sublevel sets of $U$ are called Hill's regions.
It is well-known, see \cite[Section 7]{birkhoff_restricted_1915} or \cite[Chapter 5.5]{frauenfelder_restricted_2018}, that the Hill's region has three connected components for $-c<H(L_1)$. The same holds true for the energy hypersurface $H^{-1}(-c)$.
None of these components are compact, and we will consider the connected component of $H^{-1}(-c)$, for which the closure of the projection to the $q$-coordinates contains the point $(-\mu,0)=-\mu \in \mathbb{C}$.
We will give this component a name after compactifying it in the next section.

\begin{figure}[!t]
    \centering
    \includegraphics[width=0.5\textwidth]{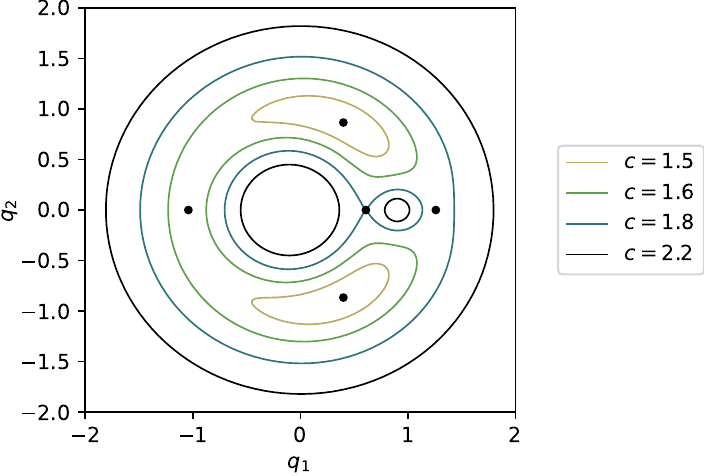}
    \caption{Plot of the boundaries of the Hill's regions for $\mu=0.1$ for various energy levels, with the Lagrange points indicated as dots. The paper is mostly concerned with orbits lying in the center region.}
 \label{fig:hillsregions}
\end{figure}

\subsection{Levi-Civita and Moser regularization}
\label{sec:LC-regularization}
To deal with singularities, we will apply the Levi-Civita regularization at $-\mu$. 
The Levi-Civita regularization is defined by the coordinate transformation
$$
q+\mu=2z^2,
\quad
p =\frac{w}{\bar z}.
$$
First note that the Hamiltonian for RTBP is given in complex coordinates by
$$
H=\frac{1}{2}|p|^2+\im(p\bar q)
-\frac{1-\mu}{|q+\mu|}
-\frac{\mu}{|q+\mu-1|}.
$$
After the above coordinate transformation, we get
$$
\tilde H=\frac{1}{2}|\frac{w}{\bar z}|^2
+2(z_1w_2-z_2w_1)
-\mu(z_1w_2+z_2w_1)/|z|^2
-\frac{1-\mu}{|2z^2|}-\frac{\mu}{|2z^2-1|}.
$$
The singularity at the origin can now be resolved. We define the Levi-Civita Hamiltonian by shifting and rescaling $\tilde H$, giving us
\begin{equation}
\label{eq:LC_hamiltonian}
\begin{split}
K_{\mu,c} &:=(\tilde H+c)\cdot |z^2| \\
&=\frac{1}{2}|w|^2+c|z|^2 -\frac{1-\mu}{2} +2|z|^2(z_1w_2-z_2w_1)
-\mu(z_1w_2+z_2w_1)
-\frac{\mu |z|^2}{|2z^2-1|}.
\end{split}
\end{equation}
On the level set $K_{\mu,c}=0$, the vector field $X_{K_{\mu,c}}$ is a multiple of the Hamiltonian vector field of $H$ on the level set $H=-c$,
$$
X_{K_{\mu,c}}=|z|^2 X_{H+c} +(H+c)\cdot X_{|z|^2},
$$
so the dynamics of $X_{K_{\mu,c}}$ are a reparametrization of the dynamics of the Hamiltonian $H$ at energy $-c$.
We will denote the component of the energy surface $K_{\mu,c}=0$ corresponding to the primary at $q=(-\mu,0)$ by $\Sigma_{\mu,c}$.  

\subsubsection{Moser regularization}
Due to the quadratic map of the Levi-Civita regularization, each point in an unregularized energy hypersurface lifts to two points. Moser regularization provides a scheme which results in a unique lift by using a map into the cotangent bundle of a sphere.
We write the unit sphere as
$$
S^n = \{ (\xi_0,\vec \xi) \in \R^{n+1}~|~ \xi_0^2 +|\vec \xi|^2=1 \}
$$
and then proceed as follows. We switch the role of position and momentum coordinates by setting 
$$
\vec x = \vec p \quad\text{and}\quad \vec y =-\vec q,
$$
and then consider the map
$$T^*\R^2 \longrightarrow T^*S^2 \subset T^*\R^3, \quad (\vec x, \vec y) \longmapsto (\xi_0, \vec \xi; \eta_0, \vec \eta)$$
where
$$
\xi_0 = \frac{|\vec x|^2 - 1}{|\vec x|^2 + 1},
\quad
\vec \xi = 2\frac{\vec x}{|\vec x|^2 + 1},
\quad
\eta_0 = \vec x \cdot \vec y,
\quad
\vec \eta =
\frac{|\vec x|^2+1}{2} \vec y -(\vec x \cdot \vec y) \vec x.
$$
This map preserves the canonical $1$-form by construction, so it satisfies the relation
$$
\vec y \cdot d\vec x = \eta_0 d\xi_0 +\vec \eta \cdot d\vec \xi.
$$
After rescaling the time-parametrization, the Hamiltonian describing the Moser regularized RTBP is given by
$$
Q_{\mu,c}
=\frac{1}{2}f(\xi,\eta)^2 |\eta|^2 
$$
with
\begin{equation}\label{eq:f3bp}
f(\xi,\eta)=
1+ \left( 1-\xi_{{0}} \right)  \left( -(c+1/2)+ \xi_{{2}}\eta_{{1}}-\xi_{{1}}
\eta_{{2}} \right) -\xi_{{2}} \left( 1-\mu \right)-\frac {(1-\mu) (1-\xi_{{0}}) }{\Vert \vec \eta (1-\xi_0)+\vec \xi \eta_0 + \vec m - \vec e \Vert}
\end{equation}
By \cite{albers_contact_2012}, every level set of this Hamiltonian below the first critical value has a component which is fiberwise starshaped, so the vector field $\eta\cdot \frac{\partial}{\partial \eta}$ is transverse to this component. With this in hand, we obtain the following well-known result.
\begin{proposition}
\label{prop:starshaped}
If $-c$ is less than the smallest critical value of the Hamiltonian $H$, then $\Sigma_{\mu,c}$ is starshaped, so in particular diffeomorphic to $S^3$. Furthermore, this component forms a $2-1$ cover of the corresponding component in the Moser regularization, which is therefore diffeomorphic to $\R P^3$.
\end{proposition}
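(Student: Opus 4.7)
The plan is to deduce both halves of the proposition from the cited result of \cite{Albers_Frauenfelder_Koert_Paternain_Liouville_field_for_PCR2BP} together with a covering argument supplied by the Levi-Civita map. That reference produces a Liouville vector field on $T^*S^2$ transverse to the bounded component of $Q_{\mu,c}^{-1}(0)$, so that component is fiberwise starshaped and is therefore diffeomorphic to the unit cotangent bundle $S^*S^2\cong\R P^3$. Once the covering relation is established, the $S^3$-diffeomorphism type of $\Sigma_{\mu,c}$ follows at once, since $S^3$ is the unique connected $2$-fold cover of $\R P^3$.

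To set up the covering, I would verify that the composition of the Levi-Civita map $(z,w)\mapsto (q,p)=(2z^2-\mu,\,w/\bar z)$ with the Moser stereographic embedding extends smoothly across the collision locus $\{z=0\}$ and is a local diffeomorphism with two-point fibers between the compactified energy hypersurfaces. The only candidate for the deck transformation is the involution $\sigma(z,w)=(-z,-w)$, which one checks is free on $\Sigma_{\mu,c}$: its only fixed point in $\C^2$ is the origin, and $K_{\mu,c}(0,0)=-(1-\mu)/2\neq 0$. This identifies $\Sigma_{\mu,c}\to\Sigma_{\mu,c}^{\mathrm{Moser}}$ as a connected double cover, which gives the $\R P^3$ statement on the Moser side and, combined with the previous paragraph, reduces the $S^3$-statement on the Levi-Civita side to the starshapedness claim.

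For starshapedness of $\Sigma_{\mu,c}$ as a hypersurface in $\C^2$, the natural candidate is the radial Liouville vector field $Y=\tfrac12(z_1\partial_{z_1}+z_2\partial_{z_2}+w_1\partial_{w_1}+w_2\partial_{w_2})$. Applying Euler's identity to the homogeneous pieces of $K_{\mu,c}$, computing the correction coming from $|2z^2-1|^{-1}$ separately, and using $K_{\mu,c}=0$ to eliminate $\tfrac12|w|^2+c|z|^2$, the transversality condition reduces to
\[
Y(K_{\mu,c})\big|_{\Sigma_{\mu,c}}=\frac{1-\mu}{2}+2|z|^2(z_1w_2-z_2w_1)+\frac{2\mu|z|^2(2|z|^4-\re(z^2))}{|2z^2-1|^3}>0.
\]
The constant $\tfrac{1-\mu}{2}$ supplies the leading positive contribution; the subcritical hypothesis confines the bounded Hill component so that $|z|^2=|q+\mu|/2$ is small and $|2z^2-1|$ is bounded below, and together with the $|w|$-bound coming from $K_{\mu,c}=0$ itself this controls the two remaining terms. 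The main obstacle is producing a clean uniform estimate, since the cross-term $2|z|^2(z_1w_2-z_2w_1)$ has no a priori sign; quantifying how close to the critical energy this transversality survives is the delicate point. Should the naive $Y$ fail to be uniformly transverse, a fallback is to pull back the rotating-frame Liouville vector field of \cite{Albers_Frauenfelder_Koert_Paternain_Liouville_field_for_PCR2BP} through the covering, which is transverse by construction.
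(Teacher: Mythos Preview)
Your treatment of the double cover via the free involution $s(z,w)=(-z,-w)$ matches the paper. The gap is in starshapedness. Your formula for $Y(K_{\mu,c})|_{K=0}$ is correct, but the estimate is not carried out, and the assertion that the subcritical hypothesis makes $|z|^2$ ``small'' fails: as $-c$ approaches $H(L_1)$ the bounded Hill component reaches all the way to $L_1$, so $|q+\mu|$ and hence $|z|$ remain of order one, and the cross term $2|z|^2(z_1w_2-z_2w_1)$ cannot be absorbed into $\tfrac{1-\mu}{2}$ by a crude size bound. Your fallback of pulling back the Liouville field from \cite{Albers_Frauenfelder_Koert_Paternain_Liouville_field_for_PCR2BP} through the cover would, as stated, only show that $\Sigma_{\mu,c}$ is of contact type, not that the \emph{radial} field is transverse.

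The paper sidesteps any estimate: it records that the Levi-Civita and Moser maps preserve the canonical $1$-form and invokes Lemma~4.2.1 of \cite{frauenfelder-koert}. Concretely, the Moser scheme swaps $q$ and $p$ before stereographic projection, so the fiberwise Liouville field $\eta\,\partial_\eta$ on $T^*S^2$ corresponds on the unregularized side to $(q+\mu)\partial_q$ rather than $p\,\partial_p$; under Levi-Civita, $q+\mu=2z^2$ is homogeneous of degree~$2$ and $p=w/\bar z$ of degree~$0$, so $(q+\mu)\partial_q$ pulls back exactly to the radial field $\tfrac12(z\partial_z+w\partial_w)$. Fiberwise starshapedness on the Moser side is therefore \emph{identical} to radial starshapedness on the Levi-Civita side, and the input from \cite{Albers_Frauenfelder_Koert_Paternain_Liouville_field_for_PCR2BP} transfers directly. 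This identification is precisely what your fallback is missing.
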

To see this, we note that both the Moser regularization and the Levi-Civita regularization preserve the canonical $1$-form. This implies that $\Sigma_{\mu,c}$ is starshaped, see Lemma~4.2.1 from \cite{frauenfelder_restricted_2018}. It follows that $\Sigma_{\mu,c}$ is diffeomorphic to $S^3$.
We note that $\Sigma_{\mu,c}$ is invariant under the symplectic involution 
\begin{equation}
    \label{eq:sympl_involution}
    s:(z,w) \longmapsto (-z,-w).
\end{equation}
as can be seen from the definition of the Levi-Civita transformation, and this implies the last claim.

\subsubsection{Contact type and action}
A hypersurface $\Sigma$ in a symplectic manifold $(W,\omega)$ is of contact type if there is a vector field $X$ defined in a neighborhood of $\Sigma$ such that 
\begin{itemize}
    \item $X$ is transverse to $\Sigma$
    \item $X$ is a Liouville vector field, so $d(i_X \omega)=\omega$.
\end{itemize}
The contact form is then defined as $\alpha = (i_X \omega)|_{\Sigma}$, i.e.~it is the restriction of the Liouville form $\lambda =i_X \omega$. The above remarks show that $\Sigma_{\mu,c}$ is of contact type. 
The Reeb vector field $R$ of a contact form $\alpha$ is defined by $i_R d\alpha=0$ and $\alpha(R)=1$.
If $\Sigma$ is, in addition, a level set of an autonomous Hamiltonian $H$, then $X_H$ is a multiple of the Reeb field on $\Sigma$.

The {\bf action} of an orbit $\gamma$ in a contact manifold $(Y,\alpha)$ is given by
$$
\mathcal A(\gamma) =\int_{\gamma} \alpha.
$$
As $\alpha(R)=1$, the action of a Reeb orbit is strictly increasing with time.
If we restrict $\mathcal A$ to periodic orbits, then one can show with variational calculus that critical points of this functional are unparametrized Reeb orbits. These are then also Hamiltonian orbits if $\Sigma=H^{-1}(0)$.

\begin{remark}
The action is independent of the parametrization, so if $\Sigma$ is the zeroset of a Hamiltonian $H$ that is of contact type, then the action of a Hamiltonian orbit $x$ can also be computed as
$$
\mathcal A(x) = \int_0^T \alpha(\dot x(t))dt=\int_{0}^T \lambda(X_H(x(t))) dt.
$$
\end{remark}

\subsection{Frames and local invariants of orbits}
\label{sec:frames_invariants}
In general, one chooses Seifert surfaces to define local frames along orbits, but in our setting we have a convenient global frame.
To see this, consider an autonomous Hamiltonian $H$ on the phase space $\R^4=\{(q_1,q_2,p_1,p_2)\}$ with symplectic form $\omega = dp\wedge dq$.
{
Since our $q$-coordinates come first, this means that the matrix representation of $\omega$ is given by
$$
\Omega_4:=
[\omega]=
\left(
\begin{array}{cccc}
0 & 0 & -1 & 0 \\
0 & 0 & 0 & -1 \\
1 & 0 & 0 & 0 \\
0 & 1 & 0 & 0
\end{array}
\right)
.
$$
}
On a regular level set $\Sigma$ of $H$, {we have $\omega(\nabla H,X_H) = -\omega(X_H,\nabla H) = dH( \nabla H)=\Vert \nabla H \Vert^2>0$}, so the two vectors 
$$
Z:=\frac{1}{\Vert \nabla H \Vert^2}\nabla H, \quad X_H=I\nabla H
$$ 
form a symplectic frame{, i.e.~$\omega(Z,X_H)=1$}. These span a rank-2 symplectic vector bundle $L$.
We take the symplectic complement as a convenient way to choose a transverse slice.
In our setting, this can be done explicitly by means of the quaternionic matrices
$$
I = 
\left(
    \begin{array}{cccc}
     0 &  0 &  1 &  0\\
     0 &  0 &  0 &  1\\
    -1 &  0 &  0 &  0\\
     0 & -1 &  0 &  0
    \end{array}
\right),\quad 
J = 
\left(
    \begin{array}{cccc}
     0 &  1 &  0 &  0\\
    -1 &  0 &  0 &  0\\
     0 &  0 &  0 & -1\\
     0 &  0 &  1 &  0
    \end{array}
\right),\quad 
K = 
\left(
    \begin{array}{cccc}
     0 &  0 &  0 & -1\\
     0 &  0 &  1 &  0\\
     0 & -1 &  0 &  0\\
     1 &  0 &  0 &  0
    \end{array}
\right).
$$
Here, the vectors $X_H = I\nabla H, J \nabla H, K \nabla H$ trivialize $T\Sigma$. We obtain a symplectic frame of the symplectic complement $L^\omega$, {which is defined as
$$
L^\omega=\{ (x,v) \in T\R^4 = \R^4\times \R^4~|~x\in \Sigma,~\omega(v,w)=0 \text{ for all } w\in L_x=\R Z(x) \oplus \R X_H(x) \}
$$
}
by setting
\begin{equation}
\label{eq:frame_contact_structure}
U:=\frac{1}{\Vert \nabla H \Vert} J \nabla H,\quad V:=\frac{1}{\Vert \nabla H \Vert} K \nabla H.
\end{equation}
{In particular, we have $\omega(U,V)=1$.}
We will now assume that $\Sigma$ is star-shaped, so $\Sigma$ is diffeomorphic to $S^3$.
We can use the first vector of the above frame to define the self-linking number.
\begin{definition}
    Suppose that $\gamma$ is a periodic Hamiltonian orbit. Define the pushoff of $\gamma$ by flowing slightly in direction of $U$, say $\tilde \gamma=Fl^U_\epsilon \circ \gamma$.
    Then the {\bf self-linking number} of $\gamma$ is defined as the linking number of $\gamma$ and $\tilde \gamma$, so $sl(\gamma)=lk(\gamma,\tilde \gamma)$.
\end{definition}
A more general definition of the self-linking number is given in Definition~3.5.28 of \cite{geiges_introduction_2008}.

\begin{definition}
    We say a periodic Hamiltonian orbit of period $T_0$ is {\bf (transversely) non-degenerate} if $\ker(dFl^{X_H}_{T_0} -\id)|_{T\Sigma}=\R X_H$.
\end{definition}
In practice, we choose a complement of $X_H$ in $T\Sigma$ and show that the restriction to this complement has no eigenvalues equal to $1$. In case $\Sigma$ is $3$-dimensional, as is the case for us, we can use Lemma~\ref{lemma:transverse_nondegeneracy}.
For non-degenerate orbits, we will define an invariant, known as Conley-Zehnder index. 
In order to define it, we need a symplectic trivialization, which we construct with the above frame
\[
\epsilon: \Sigma_{\mu,c} \times \R^2 \longrightarrow L^\omega,
\quad (x;u,v) \longmapsto u\cdot U(x) +v \cdot V(x).
\]
We use this trivialization to obtain a path of symplectic matrices representing the transverse linearized flow
\begin{equation}
\label{eq:transverse_linearized_path}
\Psi(t):=
\epsilon(Fl^{X_H}_t(x_0), {}\cdot{} )^{-1} 
\circ
pr_{L^\omega}
\circ
d_{x_0}Fl^{X_H}_t
\circ
\epsilon(x_0, {}\cdot{} ),
\end{equation}
{where $pr_{L^\omega}$ is the projection $T\R^4 \to L^\omega$, given by
$$
pr_{L^\omega}(a Z+b X_{H} +c U + dV)= cU +dV.
$$
\begin{remark}
The formula~\eqref{eq:transverse_linearized_path} can be simplified by using the following observations.
By \cite{albers_contact_2012} we know that $\Sigma_{\mu,c}$ is of contact type with contact form $\alpha= \lambda|_{\Sigma_{\mu,c}}$, and the associated Reeb flow is a reparametrization of the flow of $X_H$. For a Reeb flow, one can choose a frame $\bar U, \bar V$ trivializing $\ker \alpha$, which is invariant under the Reeb flow. For such a frame, the projection in \eqref{eq:transverse_linearized_path} is not needed.
\end{remark}

We will now give a matrix representation of formula~\eqref{eq:transverse_linearized_path}.
Given the symplectic $U,V$-frame, so $\omega(U,V)=1$, we obtain a matrix representation $\Omega_2$ for the symplectic form on $L^\omega$. This is
$$
\Omega_2=
\left(
\begin{array}{cc}
0 & 1 \\
-1 & 0
\end{array}
\right)
$$
as $\epsilon(x;1,0)=U(x)$ and $\epsilon(x;0,1)=V(x)$, so the symplectic form on $L^\omega$ is $du\w dv$ in this frame.}
Write $M(t):=[d_{x_0}Fl^{X_H}_t]$ for the $4\times 4$ time $t$-linearized flow, and put the symplectic frame $U,V$ into the $4\times 2$-frame matrix $F(x)=[\,U(x)\ V(x)\,]$.
{Writing $x_t=Fl^{X_H}_{t}(x_0) $, we have
\begin{equation}
\label{eq:reduced_monodromy}
\Psi(t)=\Omega_{2}^T F( x_t )^T \Omega_{4} M(t) F(x_0) \in Sp(2).
\end{equation}
To explain this formula, we push forward the $U(x_0),V(x_0)$-frame, encoded in the matrix $F(x_0)$,  using the time $t$-linearized flow $M(t)$. Then we take the coefficients of this transported frame in terms of the $U(x_t),V(x_t)$-frame. This is done by taking the symplectic inner product $\Omega_{4}$ with $U(x_t),V(x_t)$-frame $F(x_t)$ and applying an additional rotation $\Omega_{2}^T$. }

With this matrix description, we have the following well-known characterization.
\begin{lemma}
    \label{lemma:transverse_nondegeneracy}
    A periodic Hamiltonian orbit of $X_H$ in $\R^4$ with period $T_0$ is non-degenerate if and only if $\mathrm{tr}(\Psi(T_0)) \neq 2$. 
\end{lemma}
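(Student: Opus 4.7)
The plan is to reduce the statement to the elementary $2\times 2$ linear-algebra identity that, for $A\in Sp(2)=SL(2,\R)$, one has $\det(A-I)=\det A-\tr A+1=2-\tr A$, so $A$ has $1$ as an eigenvalue if and only if $\tr A=2$. It then suffices to identify $\Psi(T_0)$ with the reduced monodromy acting on the $2$-dimensional symplectic quotient $T_{x_0}\Sigma/\R X_H(x_0)$, and to observe that non-degeneracy is equivalent to this reduced map having no nonzero fixed vector.

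For the identification I would first check that $\Psi(T_0)\in Sp(2)$. The frame $(U,V)$ from~\eqref{eq:frame_contact_structure} satisfies $\omega(U,V)=1$, so $\epsilon(x,\cdot)$ is a symplectic trivialization of $L^\omega$; combined with the symplecticity of $M(T_0):=d_{x_0}Fl^{X_H}_{T_0}$ on $\R^4$, this forces $\Psi(T_0)\in Sp(2)$. Next, unpacking~\eqref{eq:reduced_monodromy}, the product $\Omega_2^T F(x_0)^T\Omega_4$ sends $v\in\R^4$ to $(-\omega(V(x_0),v),\omega(U(x_0),v))^T$; because $(U,V)$ and $(Z,X_H)$ are symplectic bases of the complementary symplectic subspaces $L^\omega$ and $L$, this expression is precisely the projection $\R^4\to L^\omega$ along $L$, read off in coordinates, and in particular annihilates both $\R Z$ and $\R X_H$. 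Combined with the facts that $M(T_0)$ preserves $T\Sigma$ and fixes $X_H(x_0)$, it follows that $\Psi(T_0)$ represents, in the basis $([U],[V])$, the induced symplectic map $\bar M(T_0)$ on $T_{x_0}\Sigma/\R X_H$.

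With this identification the lemma follows: $\tr\Psi(T_0)\neq 2$ iff $\bar M(T_0)$ has no nonzero fixed vector in $T_{x_0}\Sigma/\R X_H$, iff there is no $v\in T_{x_0}\Sigma\setminus\R X_H$ with $M(T_0)v-v\in\R X_H$; this in particular implies $\ker(M(T_0)-\id)|_{T\Sigma}=\R X_H$, and conversely any fixed vector of $M(T_0)$ in $T_{x_0}\Sigma\setminus\R X_H$ projects to a nonzero fixed vector of $\bar M(T_0)$. The main obstacle is bookkeeping in the middle step: verifying that with the paper's conventions ($\Omega_4=-I$ for the quaternionic $I$, $\Omega_2$ the standard symplectic $2\times 2$ form, and $\omega(a,b)=a^T\Omega_4 b$) the product $\Omega_2^T F^T\Omega_4$ truly realizes the symplectic projection onto $L^\omega$ along $L$; this reduces to a direct calculation using $\omega(U,V)=1$ together with the vanishing of $\omega(U,Z)$, $\omega(U,X_H)$, $\omega(V,Z)$, $\omega(V,X_H)$, but the sign conventions in the quaternionic frame matrices need careful tracking.
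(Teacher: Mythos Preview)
The paper does not prove this lemma; it is introduced as a ``well-known characterization'' and left without argument. Your write-up supplies the missing details, and the implication $\tr\Psi(T_0)\neq 2\Rightarrow$ non-degenerate is handled correctly: you accurately identify $\Psi(T_0)$ with the induced map $\bar M(T_0)$ on $T_{x_0}\Sigma/\R X_H$ (the projection $\Omega_2^T F^T\Omega_4$ indeed kills $L=\mathrm{span}(Z,X_H)$ and reads off $(U,V)$-coordinates), and the implication then follows from the $SL(2,\R)$ identity $\det(A-\id)=2-\tr A$.

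Your ``converse'' step, however, does not establish the other direction. The assertion that a fixed vector of $M(T_0)$ in $T_{x_0}\Sigma\setminus\R X_H$ projects to a nonzero fixed vector of $\bar M(T_0)$ is logically the contrapositive of the forward implication you just proved, not its converse. To obtain non-degenerate $\Rightarrow\tr\Psi(T_0)\neq 2$ you would need: whenever $\bar M(T_0)$ has a nonzero fixed vector $[w]$, some $w'\in T_{x_0}\Sigma\setminus\R X_H$ satisfies $M(T_0)w'=w'$, not merely $M(T_0)w'-w'\in\R X_H$. This lifting is not automatic from linear algebra. In the symplectic basis $(Z,X_H,U,V)$ one can write down symplectic matrices $M$ with $MX_H=X_H$, $M(T\Sigma)=T\Sigma$, reduced map $\bar M$ a nontrivial unipotent (so $\tr\bar M=2$), and yet $\ker(M-\id)|_{T\Sigma}=\R X_H$: the obstruction to lifting the fixed vector of $\bar M$ is a nonzero $V$-component of $MZ$, which the symplectic constraints do not rule out. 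So with the paper's literal kernel definition the ``only if'' direction needs more than what you have written. This gap is harmless for the paper's applications, which only ever invoke the direction $\tr\neq 2\Rightarrow$ non-degenerate.
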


\subsubsection{Degree definition of Conley-Zehnder index}
\label{sec:degree_def_CZ}
We define the Conley-Zehnder index following Salamon-Zehnder's paper, \cite{salamon_zehnder_1992}, but specialize to the $2$-dimensional case in order to get more explicit formulas.
{We will use the symplectic group $Sp(2)$ consisting of $2\times 2$ symplectic matrices and the unitary group $U(1)$. We will identify $e^{i\phi} \in U(1)$ with the symplectic matrix
$$
\left(
\begin{array}{cc}
\cos(\phi) & -\sin(\phi) \\
\sin(\phi) & \cos(\phi)
\end{array}
\right)
.
$$
}
The Maslov cycle is {defined as} the set 
$$
V=\{ \psi \in Sp(2)~|~\det(\psi-\id_{2})=0 \} .
$$
\begin{proposition}
    The complement $Sp(2)\setminus V$ consists of two path-connected components. These components can be characterized as
    $$
    V_+ =\{ \psi \in Sp(2)~|~\det(\psi-\id_{2})>0 \} =\{ \psi \in Sp(2)~|~\mathrm{tr}(\psi)< 2 \} ,
    $$
    and
    $$
    V_- =\{ \psi \in Sp(2)~|~\det(\psi-\id_{2})<0 \} =\{ \psi \in Sp(2)~|~\mathrm{tr}(\psi)> 2 \} . 
    $$
\end{proposition}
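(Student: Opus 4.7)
The approach is to combine one elementary identity, which reconciles the two descriptions of $V_\pm$, with normal-form arguments in $Sp(2)=SL(2,\R)$ to establish path-connectedness. Writing a general $\psi=\begin{pmatrix} a & b\\ c & d\end{pmatrix}\in Sp(2)$ with $ad-bc=1$ and expanding,
$$\det(\psi-\id_{2})=(a-1)(d-1)-bc=ad-(a+d)+1-bc=2-\mathrm{tr}(\psi).$$
Thus $V=\{\mathrm{tr}(\psi)=2\}$ and the sign of $\det(\psi-\id_{2})$ is opposite to that of $\mathrm{tr}(\psi)-2$, so the trace- and determinant-based descriptions of $V_\pm$ agree. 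In particular $V_+$ and $V_-$ are disjoint open subsets whose union is $Sp(2)\setminus V$, and the proposition reduces to showing that each is path-connected.

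For $V_-$, every element has distinct positive real eigenvalues $\lambda>1$ and $1/\lambda$ (the roots of $x^2-\mathrm{tr}(\psi)x+1$), so it is $SL(2,\R)$-conjugate to $D(\lambda):=\mathrm{diag}(\lambda,1/\lambda)$. Since $SL(2,\R)$ is itself path-connected, I would first deform the conjugating element to the identity (the trace, hence $V_-$-membership, being preserved throughout) and then slide $\lambda$ monotonically to a reference value such as $\lambda=2$; since $\lambda+1/\lambda>2$ for every $\lambda>1$, the whole path stays in $V_-$. For $V_+$, I would take $-\id$ as reference. An elliptic element ($|\mathrm{tr}|<2$) lies in the $SL(2,\R)$-orbit of a rotation $R_\theta$ with $2\cos\theta=\mathrm{tr}(\psi)$, and sliding $\theta\to \pi$ joins it to $R_\pi=-\id$ through matrices of trace strictly less than $2$. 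A negatively hyperbolic element ($\mathrm{tr}<-2$) is conjugate to $-D(\lambda)$ with $\lambda>1$, and letting $\lambda\to 1$ joins it to $-\id$ with trace $-\lambda-1/\lambda\le -2$ throughout. A matrix with $\mathrm{tr}=-2$ is handled by first perturbing the trace slightly below $-2$, landing in the hyperbolic case just treated.

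The main technical nuisance is the parabolic stratum $\{\mathrm{tr}=-2\}\subset V_+$, which is not a single $SL(2,\R)$-conjugacy class (it splits by the sign of the off-diagonal nilpotent part), so treating it by normal form alone would force a case split; the perturbation trick above sidesteps having to compare these classes directly. Once $V_+$ and $V_-$ are both shown to be path-connected, disjointness and openness imply that they are exactly the two path-components of $Sp(2)\setminus V$, yielding the proposition.
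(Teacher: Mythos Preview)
The paper states this proposition without proof, treating it as a standard fact about $Sp(2)=SL(2,\R)$; there is therefore no ``paper's own proof'' to compare against. Your argument is correct: the identity $\det(\psi-\id_2)=2-\mathrm{tr}(\psi)$ immediately reconciles the two descriptions, and your normal-form reductions (positive hyperbolic to $D(\lambda)$, elliptic to $R_\theta$, negative hyperbolic to $-D(\lambda)$) together with the path-connectedness of $SL(2,\R)$ handle all generic strata cleanly.

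One small remark on the parabolic case $\mathrm{tr}(\psi)=-2$: your perturbation trick works, but it is worth noting that the trace map $SL(2,\R)\to\R$ is a submersion away from $\pm\id$, so a trace-decreasing perturbation certainly exists for $\psi\neq -\id$; alternatively, and more directly, after conjugating to $\begin{pmatrix}-1 & s\\ 0 & -1\end{pmatrix}$ one can simply slide $s\to 0$ while keeping the trace fixed at $-2<2$, reaching $-\id$ without ever invoking the negative-hyperbolic case. Either route closes the argument.
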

For $\psi\in Sp(2)$, define
$$
\rho(\psi)=(\psi \psi^T)^{-1/2} \psi \in U(1).
$$
This map $\rho$ defines the end point of a deformation retract, which we can write out more explicitly in our $2$-dimensional situation by the following lemma.
\begin{lemma}
    Given a $2\times 2$ symplectic matrix
    $$
    \Psi=\left(
    \begin{array}{cc}
    a & b \\
    c & d
    \end{array}
    \right) 
    \in Sp(2)
    ,
    $$
    we have the formula
    \begin{equation}
    \label{eq:explicit_contraction}
    \rho(\Psi)=
    (\Psi \Psi^T)^{-1/2} \Psi
    =
    \frac{1}{\sqrt{(a+d)^2 +(b-c)^2}}
    \left(
    \begin{array}{cc}
    a+d & b-c \\
    -b+c & a+d
    \end{array}
    \right)
    .
    \end{equation}
\end{lemma}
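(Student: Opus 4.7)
The plan is to verify the identity by recognizing the right-hand side as the orthogonal factor in the polar decomposition of $\Psi$, then appealing to the uniqueness of that decomposition. Recall that for an invertible matrix $\Psi$, the polar decomposition $\Psi = P \cdot O$ with $P = (\Psi \Psi^T)^{1/2}$ symmetric positive definite and $O$ orthogonal is unique, and $\rho(\Psi) = P^{-1}\Psi = O$ is by definition this orthogonal factor.

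Let $R$ denote the right-hand side of \eqref{eq:explicit_contraction}. First, I would observe directly that $R$ is a rotation matrix: it has the form $\left(\begin{smallmatrix} \alpha & \beta \\ -\beta & \alpha \end{smallmatrix}\right)$ with $\alpha = (a+d)/N$ and $\beta = (b-c)/N$ for $N = \sqrt{(a+d)^2+(b-c)^2}$, and $\alpha^2 + \beta^2 = 1$ by construction. Hence $R \in SO(2) \subset Sp(2)$.

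Next, I would compute $P' := \Psi R^T$ explicitly and check that $P'$ is symmetric positive definite. Multiplying out the four entries of $\Psi R^T$ and invoking the symplectic relation $ad - bc = 1$ to collapse mixed terms, the off-diagonal entries both reduce to $(ac + bd)/N$, while the diagonal entries become $(a^2+b^2+1)/N$ and $(c^2+d^2+1)/N$. Symmetry is thus immediate. For positivity, the trace $(a^2+b^2+c^2+d^2+2)/N$ is manifestly positive, and a short expansion using $(ad-bc)^2 = 1$ yields
\[
\det(\Psi R^T) = \frac{a^2+b^2+c^2+d^2+2}{(a+d)^2+(b-c)^2}.
\]
Expanding $(a+d)^2+(b-c)^2 = a^2+b^2+c^2+d^2 + 2(ad-bc) = a^2+b^2+c^2+d^2 + 2$ identifies numerator and denominator, so $\det(\Psi R^T) = 1 > 0$. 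Trace and determinant both positive forces positive definiteness of the symmetric $2\times 2$ matrix $P'$.

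Finally, I would invoke the uniqueness of the polar decomposition: from $\Psi = P' R$ with $P'$ symmetric positive definite and $R$ orthogonal, we conclude $P' = (\Psi \Psi^T)^{1/2}$ and $R = (\Psi \Psi^T)^{-1/2}\Psi = \rho(\Psi)$, which is the formula \eqref{eq:explicit_contraction}. The main thing to keep track of is the systematic use of $ad - bc = 1$ at the three moments where it is needed (symmetrization of the off-diagonal entries, computation of the determinant, and the identification of $(a+d)^2+(b-c)^2$ with $a^2+b^2+c^2+d^2+2$); otherwise the proof is a direct calculation with no real obstacle.
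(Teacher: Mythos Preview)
Your proof is correct. The paper takes a slightly different organizational route: it computes $\Psi\Psi^T$, applies Levinger's closed formula for the square root of a $2\times 2$ matrix to obtain $(\Psi\Psi^T)^{1/2}$ explicitly (arriving at the same matrix you call $P'$), and then inverts and multiplies by $\Psi$. Your approach works backwards from the claimed answer --- showing $R$ is orthogonal, $\Psi R^T$ is symmetric positive definite, and invoking uniqueness of the polar decomposition --- which spares you the explicit matrix inversion. Both arguments rest on the same core computation and the same uses of $ad-bc=1$; yours is marginally cleaner in that it avoids citing an external square-root formula, while the paper's is a straight derivation rather than a verification.
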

\begin{proof}
We apply the formula for the square root of a $2\times 2$ matrix~\cite{levinger_square_1980} to $M:=\Psi \Psi^T$. 
{This formula is as follows. Setting $s=\pm \sqrt{\det M}$ and $t^2=\mathrm{tr}(M)+2s$, a square root of $M$ is given by
$$
R=\frac{1}{t}(M+s \id_2).
$$
Plugging in 
$$
M=
\left(
\begin{array}{cc}
a^2+b^2 & ac+bd \\
ac+bd & c^2 +d^2
\end{array}
\right)
$$
and inspecting the signs, we find $s=1$ and $t=\sqrt{(a+d)^2 +(b-c)^2}$ for the positive definite root. So we have}
$$
(\Psi \Psi^T)^{1/2} = 
\frac{1}{\sqrt{(a+d)^2 +(b-c)^2}}
\left(
\begin{array}{cc}
a^2 + b^2 + 1 & ac + bd \\
ac + bd & c^2 + d^2 + 1
\end{array}
\right)
.
$$
By taking its inverse and multiplying with $\Psi$ on the right we get the desired formula.
\end{proof}

Let $\psi:[0,1] \to Sp(2)$ be a non-degenerate path of symplectic matrices, meaning $\psi(1)$ has no eigenvalue equal to $1$. We can extend $\psi$ to a path $\tilde \psi : [0,2] \to Sp(2)$ whose endpoint is either $W_+=-\id_2$ or $W_-=\mathrm{diag}(2,1/2)$, where we require the extension to be disjoint from the Maslov cycle. The choice of the endpoints ensure that $\rho(\tilde \psi(2)) =\pm 1$.
\begin{definition}
    The {\bf Conley-Zehnder index} of $\psi:[0,1]\to Sp(2)$ is defined by $$\mu_{CZ}(\psi):=\deg(\rho(\tilde \psi)^2),$$ where $\tilde \psi$ is the extension described above.
{    The {\bf (transverse) Conley-Zehnder index of a Hamiltonian orbit} $\gamma$ in $\R^4$ is defined as
    $$
    \mu_{CZ}(\gamma):=\mu_{CZ}(\Psi).
    $$
    where $\Psi$ is the transverse linearized path of symplectic matrices as defined in Equation~\eqref{eq:transverse_linearized_path}.}
\end{definition}

{
To give some intuition behind this definition, note that $\tilde \psi(2)$ lies in one of the two components of $Sp(2)\setminus V$. After retraction to the circle $U(1)$, we have $\rho(\tilde \psi(2)) =\pm 1$. By doubling the speed of this path $t\mapsto \rho(\tilde \psi(t))$, we obtain a loop $t\mapsto  \rho(\tilde \psi(t))^2$, starting and ending at $1$, and the Conley-Zehnder index is the winding number of this loop.

\begin{remark}
In the setup we consider here, i.e.~the special topology of the energy hypersurface, the index of an orbit does not depend on the choice of a global symplectic trivialization. Moreover, the index of a family of non-degenerate periodic Hamiltonian orbits is constant.
\end{remark}
}

In what follows, we show that the Conley-Zehnder index can be computed in the $2$-dimensional case without choosing an explicit extension.
{
Namely, in Proposition~\ref{prop:index_computation} we show that the index can be determined by the amount of rotation of the path $t\mapsto \rho(\psi(t))$ around $S^1$.
Figure~\ref{fig:rotation_angle} provides a schematic illustration of this result.
}
\begin{lemma}
\label{lemma:fibers}
The fiber $\rho^{-1}(1) \setminus \id_2 $ lies in the component $V_-$ and the fiber $\rho^{-1}(-1)$ lies in the component $V_+$.
\end{lemma}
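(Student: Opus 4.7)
The plan is to read off both fibers directly from the explicit formula \eqref{eq:explicit_contraction} in the previous lemma, and then compare the resulting trace with $2$ using the characterizations of $V_\pm$ already recorded in the excerpt.

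First I would translate the two fiber conditions into conditions on the entries of $\Psi=\bigl(\begin{smallmatrix} a & b \\ c & d\end{smallmatrix}\bigr) \in Sp(2)$. By \eqref{eq:explicit_contraction}, $\rho(\Psi)$ is the orthogonal matrix with entries $\frac{a+d}{N}$ on the diagonal and $\pm\frac{b-c}{N}$ off-diagonal, where $N=\sqrt{(a+d)^2+(b-c)^2}$. Under the identification of $U(1)$ with $SO(2)$, the element $+1$ corresponds to $\id_2$ and $-1$ corresponds to $-\id_2$. Hence $\rho(\Psi)=+1$ is equivalent to $b=c$ and $a+d>0$, while $\rho(\Psi)=-1$ is equivalent to $b=c$ and $a+d<0$.

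Next I would extract an inequality for the trace using the symplectic condition. For any $\Psi$ in either fiber we have $b=c$, so $ad-b^2 = \det\Psi=1$ and therefore
\[
(a+d)^2 = (a-d)^2 + 4ad = (a-d)^2 + 4 + 4b^2 \geq 4,
\]
with equality if and only if $a=d$ and $b=0$, which (together with $ad=1$) forces $a=d=\pm 1$ and hence $\Psi=\pm\id_2$. For $\Psi\in\rho^{-1}(1)$ with $\Psi\neq\id_2$ this gives $\mathrm{tr}(\Psi)=a+d>2$, so $\Psi\in V_-$. For $\Psi\in\rho^{-1}(-1)$ we have $a+d<0<2$, so trivially $\mathrm{tr}(\Psi)<2$ and $\Psi\in V_+$ (one should also note that $-\id_2$ is indeed in the fiber and satisfies $\det(-\id_2-\id_2)=4>0$, so it is not on the Maslov cycle).

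There is no real obstacle here; the content is just the observation that the symplectic condition $ad-b^2=1$ rigidifies the off-antidiagonal fibers of $\rho$ so much that the trace is pinned away from $2$ from the correct side. The only point to be careful about is the exclusion of $\id_2$ from $\rho^{-1}(1)$, which is forced precisely because $\mathrm{tr}(\id_2)=2$ lies on the Maslov cycle $V$, whereas no analogous exclusion is needed for $\rho^{-1}(-1)$.
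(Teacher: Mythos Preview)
Your proof is correct and follows essentially the same approach as the paper: both read off $b=c$ and the sign of $a+d$ from formula~\eqref{eq:explicit_contraction}, then use the symplectic constraint $ad-b^2=1$ to force $|\mathrm{tr}\,\Psi|\geq 2$. The only cosmetic difference is that the paper substitutes $d=(1+b^2)/a$ and applies AM--GM to $a+\tfrac{1+b^2}{a}$, whereas you rewrite $(a+d)^2=(a-d)^2+4(1+b^2)\geq 4$; these are the same inequality in two guises.
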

\begin{proof}
Take 
$$
\Psi=\left(
\begin{array}{cc}
a & b \\
c & d
\end{array}
\right)
\in \rho^{-1}(\pm 1)
,
$$
By formula~\eqref{eq:explicit_contraction} either fiber satisfies $b=c$. With this restriction, we find $ad-b^2=1$, and hence 
    $$
    \mathrm{tr}(\Psi) = a+\frac{1+b^2}{a}.
    $$
For $\Psi \in \rho^{-1}(1)$, we have $a>0$, and hence
$$
\mathrm{tr}(\Psi) = a+\frac{1+b^2}{a}\geq a +\frac{1}{a} \geq 2 
$$
as the minimum is attained in $a=1$. Similarly, for $\Psi \in \rho^{-1}(-1)$, we have $a<0$, and we have $\mathrm{tr}(\Psi)\leq -2$ in that case.
The claim follows.
\end{proof}

\begin{proposition}
    \label{prop:index_computation}
    Let $\psi : [0, T] \to Sp(2)$ be a non-degenerate path of symplectic matrices. Lift the path $t\mapsto\rho(\psi(t))$ to a path $\theta:[0, T] \to \R$ in the universal cover, imposing $\theta(0)=0$. Then,
    $$\mu_{CZ}(\psi)=\begin{cases}
        \text{closest even integer to }\theta(T)/\pi, & \text{if }\mathrm{tr}(\psi(T)) > 2 \\
        \text{closest odd integer to }\theta(T)/\pi, & \text{if }\mathrm{tr}(\psi(T)) < 2 
    \end{cases}$$
\end{proposition}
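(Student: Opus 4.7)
The plan is to reduce the Conley--Zehnder index to a winding computation via the map $\rho$, exploiting that in the two-dimensional setting each component $V_\pm$ is mapped by $\rho$ into a proper arc of $U(1)$, so that any continuous lift of $\rho\circ\tilde\psi$ along the extension is trapped in a single fundamental domain.

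The first step is to evaluate $\rho$ on the reference endpoints. Formula \eqref{eq:explicit_contraction} gives $\rho(W_+)=-\id_2$ and $\rho(W_-)=\id_2$, which under the identification $U(1)\cong\R/2\pi\Z$ correspond to $-1$ and $1$. Let $\tilde\theta$ denote the extension of $\theta$ to a continuous lift of $\rho\circ\tilde\psi$, and write $\tilde\theta_{\mathrm{end}}$ for the value at the endpoint of the extension. Then $\tilde\theta_{\mathrm{end}}\in\pi+2\pi\Z$ if the extension terminates at $W_+$, and $\tilde\theta_{\mathrm{end}}\in 2\pi\Z$ if it terminates at $W_-$. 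Since $\mu_{CZ}(\psi)=\deg(\rho(\tilde\psi)^2)=\tilde\theta_{\mathrm{end}}/\pi$ (as $\tilde\theta(0)=0$), the index is odd in the first case and even in the second.

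The second step identifies the case and pins down the integer. Because the extension must avoid the Maslov cycle $V$, it stays in the connected component of $\psi(T)$: in $V_+$ when $\mathrm{tr}(\psi(T))<2$, terminating at $W_+\in V_+$ (trace $-2$), and in $V_-$ when $\mathrm{tr}(\psi(T))>2$, terminating at $W_-\in V_-$ (trace $5/2$). Formula \eqref{eq:explicit_contraction} shows that $\mathrm{Re}\,\rho(\Psi)$ has the sign of $\mathrm{tr}(\Psi)$, whence $\rho(V_-)\subset\{e^{i\phi}:\phi\in(-\pi/2,\pi/2)\bmod 2\pi\}$. The argument in the proof of Lemma~\ref{lemma:fibers} also shows that $\rho(\Psi)=1$ forces $\mathrm{tr}(\Psi)\geq 2$, so $\rho(V_+)\subset U(1)\setminus\{1\}$.

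Finally, one lifts and concludes. For an extension in $V_-$, the lift $\tilde\theta$ takes values in the preimage $\bigsqcup_{k\in\Z}(2\pi k-\pi/2,\,2\pi k+\pi/2)$; by connectedness it stays in the unique component $(2\pi k_0-\pi/2,\,2\pi k_0+\pi/2)$ containing $\theta(T)$, and the only element of $2\pi\Z$ there is $2\pi k_0$, giving $\mu_{CZ}(\psi)=2k_0$, the closest even integer to $\theta(T)/\pi$. For an extension in $V_+$, the lift stays in the unique interval $(2\pi k_0,\,2\pi(k_0+1))$ of $\bigsqcup_k(2\pi k,\,2\pi(k+1))$ containing $\theta(T)$, whose unique element of $\pi+2\pi\Z$ is $2\pi k_0+\pi$, so $\mu_{CZ}(\psi)=2k_0+1$, the closest odd integer to $\theta(T)/\pi$. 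Existence of the extensions follows from the path-connectedness of $V_\pm$, and since $\tilde\theta_{\mathrm{end}}$ is forced by the argument, the index does not depend on the choice of extension. The only subtlety worth emphasizing is the arc-confinement step, which is where the two-dimensional simplicity truly enters: higher-dimensional analogues of $V_\pm$ are not contained in $\rho$-preimages of proper arcs, and that is why the general case does not admit such an elementary formula.
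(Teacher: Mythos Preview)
Your proof is correct and follows essentially the same approach as the paper's: extend the path within the appropriate component $V_\pm$, use that $\rho$ restricted to that component avoids a point of $U(1)$ (so the lift is trapped in a single fundamental interval), and read off the index. The only minor difference is that for $V_-$ you use the sharper observation $\rho(V_-)\subset\{\mathrm{Re}>0\}$ coming directly from \eqref{eq:explicit_contraction}, whereas the paper invokes Lemma~\ref{lemma:fibers} to get the weaker but sufficient $\rho(V_-)\subset U(1)\setminus\{-1\}$; both lead to the same conclusion.
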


\begin{proof}
If $\mathrm{tr}(\psi(T)) > 2$, we can extend $\psi$ to a path $\tilde\psi:[0, 2T]\to Sp(2)$ such that $\tilde\psi([T, 2T]) \subset V_-$. By Lemma~\ref{lemma:fibers}, we have $\rho(\tilde \psi(t)) \neq -1$ for $t\in [T, 2T]$, while $\rho(\tilde \psi(2T)) = 1$. Thus, $\deg(\rho(\tilde \psi)^2)$ is equal to the closest even integer to $\deg(\rho(\psi)^2)$.

The case $\mathrm{tr}(\psi(T)) < 2$ can be proved analogously.
\end{proof}
{
\begin{figure}[!t]
    \centering
    \includegraphics[width=0.8\textwidth]{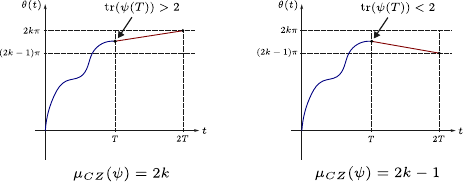}
    \caption{Schematic plot of the lift $\theta$ of the extension $\rho(\tilde \psi(t))$ to the cover $\R \to S^1=U(1)$ for two cases, along with their corresponding Conley-Zehnder indices.}
    \label{fig:rotation_angle}
\end{figure}
}
\subsection{Symmetries}
The Hamiltonian $H$ of the restricted three-body problem is invariant under the anti-symplectic involution
$$
r:(q_1,q_2;p_1,p_2) \longmapsto (q_1,-q_2;-p_1,p_2).
$$ 
In complex coordinates, this involution is given by $r(q;p)=(\bar q,-\bar p)$.
The symmetry of the Jacobi Hamiltonian extends to the Levi-Civita regularization in the following way.

There are two anti-symplectic involutions corresponding to $r$.
These are
$$
R_+:(z,w)\longmapsto (\bar z,-\bar w)
\quad
R_-:(z,w)\longmapsto (-\bar z,\bar w).
$$
The union of the fixed point loci of these involution maps cover the fixed point locus of $r$.

It is known that the fixed point locus of an anti-symplectic involution is a Lagrangian submanifold. 
In our case, we see this directly and explicitly, as we have
$$
Fix(R_+)=\{ (z_1,0;0,w_2) \}
\text{ and }
Fix(R_-)=\{ (0,z_2;w_1,0) \}
$$
Both of these sets are $2$-planes in $\C^2\cong \R^4$. Applying Proposition~\ref{prop:starshaped}, we see that $C_+:=Fix(R_+) \cap \Sigma_{\mu,c}$ forms a single circle. The same holds for $C_-:=Fix(R_-) \cap \Sigma_{\mu,c}$.
Using the starshapedness again, we see that the following holds.
\begin{proposition}
The sets 
$$
S_+ := \{ (z_1,z_2;w_1,w_2) \in \Sigma_{\mu,c}~|~z_2=0 \} 
\text{ and }
S_- := \{ (z_1,z_2;w_1,w_2) \in \Sigma_{\mu,c}~|~z_1=0 \} 
$$
are both diffeomorphic to a $2$-sphere and contain the closed curves $C_+$ and $C_-$, respectively.
\end{proposition}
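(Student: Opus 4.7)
The plan is to exploit Proposition~\ref{prop:starshaped}, which asserts that $\Sigma_{\mu,c}$ is starshaped in $\C^2 \cong \R^4$ with respect to the origin. Concretely, starshapedness provides a smooth positive function $g \colon S^3 \to \R_{>0}$ such that the radial parametrization
\[
\Phi \colon S^3 \longrightarrow \Sigma_{\mu,c}, \qquad v \longmapsto g(v)\, v,
\]
is a diffeomorphism. This will be the workhorse of the argument.

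Writing $P_+ := \{z_2=0\}$ and $P_- := \{z_1=0\}$ for the corresponding real linear $3$-planes in $\R^4$, I would first note that by definition $S_\pm = \Sigma_{\mu,c} \cap P_\pm$. Because each $P_\pm$ is invariant under rescaling and $g > 0$, a point $g(v)v$ lies in $P_\pm$ if and only if $v$ does. Hence $\Phi$ restricts to a diffeomorphism from the great $2$-sphere $S^3 \cap P_\pm$ onto $S_\pm$, which shows that each $S_\pm$ is diffeomorphic to $S^2$.

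Finally, the inclusions $C_\pm \subset S_\pm$ follow immediately from the explicit coordinate descriptions given just before the statement: $\mathrm{Fix}(R_+) = \{(z_1,0;0,w_2)\}$ lies in $P_+$, while $\mathrm{Fix}(R_-) = \{(0,z_2;w_1,0)\}$ lies in $P_-$. I do not anticipate any real obstacle: the content is the elementary observation that a linear slice through the origin of a starshaped hypersurface is itself a starshaped hypersurface in the slice, and hence a topological sphere of one dimension lower. The only point worth checking carefully is that the starshaped parametrization in Proposition~\ref{prop:starshaped} is really centered at the origin of $\C^2$, so that the linear planes $P_\pm$ are preserved by the rescaling; this is verified by noting that the Liouville vector field used there is the radial Euler field $\tfrac{1}{2}(z\partial_z + w\partial_w)$ associated to the symplectic form $dw\wedge dz$.
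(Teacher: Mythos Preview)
Your argument is correct and is precisely the intended elaboration of the paper's one-line justification ``Using the starshapedness again, we see that the following holds'': intersecting a radially starshaped hypersurface with a linear subspace through the origin yields a starshaped hypersurface in that subspace, hence a sphere of the appropriate dimension. One minor remark on your closing verification: the Liouville form actually invoked in the paper's argument for Proposition~\ref{prop:starshaped} comes from pulling back the canonical $1$-form $p\,dq$, which gives (a multiple of) $w\,dz$ rather than $\tfrac12(w\,dz - z\,dw)$; nonetheless the conclusion of that proposition is exactly radial starshapedness in $\R^4$, which is all your slicing argument requires.
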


\subsubsection{Symmetric periodic orbits}
Consider a periodic orbit $x\in C^\infty([0,T],\R^4)$ of $X_H$ with period $T$. The ``mirrored'' orbit
$$
x_r(t)=r\circ x(T-t)
$$
is then also periodic with period $T$.
\begin{definition}
We call a periodic orbit $x$ {\bf symmetric} if $x_r(t)=x(t)$.
\end{definition}
It follows from this definition that a (regular) symmetric periodic orbit intersects $Fix(r)$ at least twice, namely at $t=0$ and $t=T/2$.

\subsubsection{Parametrizing the fixed point locus}
\label{sec:parametrizing_fixed_locus}
In the proof we need explicit parametrizations of the fixed point locus of these involutions.
To do so, we consider the intersection of the fixed point locus with the positive $z_i$-axis for $i=1,2$, and denote the $z_i$ coordinate of the intersection points by $B^i_{\mu,c}$. These correspond to points in the boundary of the Hill's region.

We can parametrize $C_+=Fix(R_+)\cap \Sigma_{\mu,c }$ by setting
$$
w_2^{\pm}(z_1) = -(2 z_1^3-\mu z_1)\pm \sqrt{(2 z_1^3-\mu z_1)^2-2(c z_1^2 -\frac{\mu z_1^2}{|1-2z_1^2|} -\frac{1-\mu}{2})}
$$
for $z_1\in [-B^1_{\mu,c}, B^1_{\mu,c}]$.

For $C_-=Fix(R_-)\cap \Sigma_{\mu,c }$, we find analogously
$$
w_1^{\pm}(z_2) = -(-2 z_2^3-\mu z_2)\pm \sqrt{(-2 z_2^3-\mu z_2)^2-2(c z_2^2 -\frac{\mu z_2^2}{|1+2z_2^2|} -\frac{1-\mu}{2})}
$$
for $z_2\in [-B^2_{\mu,c}, B^2_{\mu,c}]$.

\subsubsection{Crossing number}
Before we give a precise definition, we first define the crossing number intuitively.
Given a periodic orbit $\gamma$ of $X_H$, we want to define the crossing number of the (set-theoretic) intersection number of $\gamma$ with the set $q_2=0$.
This naive definition would be undefined because of collision orbits, so we use a slightly more complicated definition.
First of all, we observe that any periodic orbit $\gamma$ in the Moser regularization lifts to either to a periodic orbit in the Levi-Civita regularization or to an orbit whose initial and final points are antipodal.

\begin{definition}
    Suppose $\gamma$ is a periodic orbit of the Moser regularized vector field of period $T$. Denote its lift to the Levi-Civita regularization by $\tilde \gamma$.
    Then the {\bf crossing number} of $\gamma$ is 
    $$
    cross(\gamma) = \# \{ t\in [0, T)~|~pr_{z_1}\circ \tilde \gamma(t)=0 \text{ or } pr_{z_2}\circ \tilde \gamma(t)=0 \}.
    $$
\end{definition}
We note that the crossing number counts the number of intersections with $S_\pm$.

\subsection{Validated numerics}
\label{sec:validated_numerics}
In this work, all numerical computations are done rigorously using interval arithmetic, which gives guaranteed bounds on the computed results. A brief overview of interval arithmetic is given below, but for more details we refer the readers to~\cite{moore_introduction_2009}. 

The key idea of interval arithmetic is to do computations with sets. We consider the set of (closed) real intervals.
$$ \mathbb{I}\R = \{ [a] = [a_l, a_r] ~ | ~ a_l, a_r \in \R, a_l \leq a_r \} $$
For each of the basic operations $\circ = +, -, \times, /$, we define
$$ [a] \circ [b] = \{ x \circ y ~ | ~ x \in [a], y \in [b] \} $$
By defining arithmetic this way, it is guaranteed that the resulting interval $[a] \circ [b]$ encloses all possible values resulting from the arithmetic operation. For a computer implementation involving floating point numbers, we extend real interval arithmetic by rounding the endpoints of $[a] = [a_l, a_r]$ to the nearest representable numbers\footnote{By representable number we mean a real number that can be represented exactly by a floating point number.} $\tilde a_l, \tilde a_r$ so that $[a] \subset [\tilde a_l, \tilde a_r]$. This is known as outward rounding and can be extended to all basic operations. 
\begin{definition}
An {\bf interval vector} or {\bf box} is an element $[v]$ in $\mathbb{I}\R^n$. If $x=(x_1,\ldots,x_n) \in [v]$ we refer to $[v]$ as an {\bf enclosure} of $x$.
\end{definition}
A basic application of interval arithmetic is that it can be used to compute explicit bounds on a continuous function over a compact domain in $\R^n$. Given an interval vector $[u]$, we shall denote by $[f([u])]$ an interval vector enclosing the image $f([u])$.

In our setting, we apply interval arithmetic to obtain rigorous enclosures of solutions to ODEs and their derivatives. For this, we use the CAPD Library~\cite{kapela_capd_2021}, which implements the $C^1$-Lohner algorithm~\cite{zgliczynski_c1-lohner_2002} based on the Taylor method for solving differential equations. To be explicit, let $f:\R^n \to \R^n$ be a real analytic vector field and consider the initial value problem
\begin{equation*}
\left\{
\begin{aligned}
\dot x(t) & = f(x(t)), \\
x(0) & = x_0.
\end{aligned}
\right.
\end{equation*}
The algorithm provides rigorous bounds for the trajectory $Fl_t^f([x_0])$ as well as for the linearized flow $dFl_t^f([x_0])$. Moreover, for a given Poincar\'e section, rigorous enclosures for the Poincar\'e map $P([x_0])$ and its derivative $dP([x_0])$ can be computed as well. For details of the algorithm, see~\cite{zgliczynski_c1-lohner_2002, wilczak_Cr_Lohner_2011}.

{
There is extensive literature on applying these methods to validate dynamical properties of low-dimensional ODE systems. Most relevant to our work are studies on symmetric periodic orbits in Hamiltonian systems, such as the existence and stability of periodic orbit families in the H\'enon-Heiles system by~\cite{barrio_rodriguez_systematic_2014}. For the restricted three-body problem, prior works have examined the existence and bifurcations of Lyapunov orbits~\cite{capinski_lyapunov_2012}, halo orbits~\cite{walawska_touch_and_go_2019}, and ejection-collision orbits~\cite{capinski_collision_orbits_2023}.
}

\section{Proofs}
\label{sec:proofs}

\subsection{Crossing theorem}
Before we dive into the proofs of the theorems stated in the introduction, we first state a result which describes the orbits more geometrically and which is computationally easier.
\begin{theorem}
\label{thm:crossing_axis}
Define $V = V_h \cup V_v$, with
\begin{align*}
    V_h & = \{ ( \mu, c) ~|~\mu \in [0, 0.9] \text{ and } c\in [2.1, 2.1 + 10^{-5}] \}, \\
    V_v & = \{ ( \mu, c) ~|~\mu \in [0,10^{-4}] \text{ and } c\in [1.52, 2.1] \} .
\end{align*}
Consider $(\mu,c) \in V$. 
\begin{enumerate}
    \item The component $\Sigma_{\mu,c}$ has a unique symmetric retrograde periodic orbit with crossing number $2$, and this orbit is transversely non-degenerate.
    \item The component $\Sigma_{\mu,c}$ has a symmetric direct periodic orbit with crossing number $2$. If $(\mu,c) \in V_h$, this orbit is unique and transversely non-degenerate.
    \item If $(\mu,c) \in V_h$, the retrograde and the direct orbits described above are the only two symmetric periodic orbits with crossing number $2$.
\end{enumerate}
These orbits project to non-contractible orbits in $\R P^3$.
\end{theorem}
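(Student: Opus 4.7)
The plan is a covering argument on the fixed-point loci of the anti-symplectic involutions $R_\pm$, combined with the validated ODE machinery from Section~\ref{sec:validated_numerics}. A symmetric periodic orbit intersects $Fix(R_+) \cup Fix(R_-)$ at the symmetry times $t = 0$ and $t = T/2$, and crossing number exactly $2$ in the Moser sense forbids intermediate intersections with $\{z_1 z_2 = 0\}$; thus the first return to the Poincar\'e section $\{z_1 z_2 = 0\}$ starting from $C_+$ or $C_-$ must itself land on $C_+ \cup C_-$. Using the $s$-symmetry of $\Sigma_{\mu,c}$ one may restrict starting points to $C_+ \cap \{z_1 > 0\}$ with $x_0(z_1) = (z_1,0; 0, w_2^{\pm}(z_1))$ as in Section~\ref{sec:parametrizing_fixed_locus}, together with the mirror shooting on $C_- \cap \{z_2 > 0\}$.

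For each such starting point I would integrate $X_{K_{\mu,c}}$ forward with the $C^1$-Lohner algorithm until the first intersection with $\{z_1 = 0\} \cup \{z_2 = 0\}$ at some return time $T_1$, and define a shooting defect
\begin{equation*}
F(z_1, \mu, c) =
\begin{cases}
w_1(T_1), & \text{ if } z_2(T_1) = 0, \\
w_2(T_1), & \text{ if } z_1(T_1) = 0,
\end{cases}
\end{equation*}
whose zeros correspond exactly to symmetric periodic orbits of crossing number $2$. I would then cover $V_h \times (0, B^1_{\mu,c}]$ (together with the analogues for the other branch of $w_2^{\pm}$ and for $C_-$) by small closed sub-boxes, obtaining rigorous enclosures of $F$ and $\partial_{z_1} F$ on each. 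Sub-boxes on which $F$ has a definite sign contain no orbit; sub-boxes on which $F$ changes sign while $\partial_{z_1} F$ has a definite sign contribute exactly one orbit by the Intermediate Value Theorem together with monotonicity. This simultaneously produces existence, uniqueness and the complete catalogue of crossing-number-$2$ orbits, i.e.\ items (1)--(3) over $V_h$.

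Each located orbit is then certified further using the same enclosures. The retrograde/direct classification reduces to comparing $w_2^{\pm}(z_1)/z_1$ with $\mu$ via~\eqref{eq:retro_condition}; transverse non-degeneracy follows by evaluating $\mathrm{tr}(\Psi(T_0))$ from~\eqref{eq:reduced_monodromy} and invoking Lemma~\ref{lemma:transverse_nondegeneracy}; non-contractibility in $\R P^3 \cong M_{\mu,c}$ is equivalent to the statement that a starting point on $C_+$ returns to $C_-$ (or vice versa), since only then does the composition $R_+ \circ R_- = s$ act on the orbit by a nontrivial time-shift, making the LC orbit an $s$-equivariant double cover of the Moser orbit. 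For the vertical strip $V_v$ the same scheme produces the retrograde orbit and at least one direct orbit, but uniqueness of the direct orbit is correspondingly weakened in anticipation of the one-parameter family of direct orbits at $\mu = 0$ in the rotating Kepler problem.

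The main obstacle is purely computational. The enclosure of $\partial_{z_1} F$ must have a definite sign on every sub-box; near the direct orbit (where $F$ is very nearly flat) and near the endpoints $z_1 \to 0^+$ (the collision locus) and $z_1 \to B^1_{\mu,c}$ (where $w_2^+$ and $w_2^-$ coalesce at the Hill boundary) this requires very fine subdivision, or a local analytic change of variable, otherwise the naive Lohner enclosures blow up. Similarly, whenever the first axis return grazes the collision locus $z = 0$, both $z_1 = 0$ and $z_2 = 0$ occur simultaneously and the Poincar\'e section definition needs tailored handling before the covering can be closed up.
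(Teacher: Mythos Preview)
Your approach is essentially the paper's: cover $C_\pm$ by intervals, shoot to the first intersection with $S_+\cup S_-$, define a scalar defect whose zeros are the crossing-number-$2$ symmetric orbits, and certify existence/uniqueness by sign changes together with a definite sign of the derivative; non-degeneracy is then handled via the trace of the reduced monodromy~\eqref{eq:reduced_monodromy}, and non-contractibility by checking that the first hit from $C_+$ lands on $C_-$. The paper organises the section slightly differently (it lets each interval target either $S_+$ \emph{or} $S_-$ rather than the union, which sidesteps the collision-locus ambiguity you flag at the end), but the logic is the same.

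One point does need correcting. The retrograde/direct property is a condition along the \emph{entire} orbit: $\dot\phi$ must keep a fixed sign for all $t$, cf.\ the discussion around~\eqref{eq:retro_condition} and Figure~\ref{fig:retrograde}. Your proposed test ``compare $w_2^{\pm}(z_1)/z_1$ with $\mu$'' only evaluates the inequality at $t=0$; that is not sufficient, since there are symmetric orbits for which the sign of $L_\mu-\mu q_1-\mu^2$ changes along the trajectory. The paper instead encloses the orbit between time steps and verifies~\eqref{eq:retro_condition} on every sub-interval. This is an easy fix, but without it the retrograde/direct labelling in items~(1) and~(2) is not actually established.
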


Our argument follows Birkhoff's original argument, but it works on the regularization and is quantitative. 
We construct a cover of $C_+$ by finitely many closed intervals $I_p$, indexed by an interior point $p$, such that one of the following holds:
\begin{enumerate}
    \item[($-$)] the first hitting time of $I_p$ with $S_-$ defines a continuous function $\tau_p:I_p \to \R_{>0}$. Furthermore, for all $q\in I_p$, the flow line $Fl^X_t(q)$ intersects $S_-$ transversely at $t=\tau_p(q)$.
    \item[($+$)] the first hitting time of $I_p$ with $S_+$ defines a continuous function $\tau_p:I_p \to \R_{>0}$. Furthermore, for all $q\in I_p$, the flow line $Fl^X_t(q)$ intersects $S_+$ transversely at $t=\tau_p(q)$.
\end{enumerate}
Logically, the two options are not disjoint, but both options need to be considered in order to cover the entire fixed point locus. While the existence of such a covering is not given a priori, such a covering can be obtained numerically through subdivision where for each interval the transversality condition is validated.

We denote the first hitting point of $q\in I_p$ with $S_\pm$ in either case by $h_p(q)$. In the implementation, this is computed using the class \texttt{IPoincareMap} provided by the CAPD library. 
To find symmetric orbits, we consider the slope functions $s_p$ defined on each interval $I_p$ as follows: 
\begin{enumerate}
    \item[($-$)] if $h_p:I_p\to S_-$, then define $s_p=pr_{w_2}\circ h_p$. Note that $h_p(q)\in C_-$ if and only if $s_p(q)=0$.
    \item[($+$)] if $h_p:I_p\to S_+$, then define $s_p=pr_{w_1}\circ h_p$. Note that $h_p(q)\in C_+$ if and only if $s_p(q)=0$.
\end{enumerate}
These slope functions are continuous by construction, and its zeros correspond to symmetric periodic orbits as shown by the following lemma.
\begin{lemma}
Suppose that $I_p\subset C_+$ is an interval as above, and assume that the slope function $s_p$ has a zero at $p_0\in I_p$.
Then $p_0$ is the initial point of a symmetric periodic orbit.
\end{lemma}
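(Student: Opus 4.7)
The plan is to exploit the time-reversal identities coming from the anti-symplectic symmetries of the Levi-Civita Hamiltonian. Since $R_+$ and $R_-$ are anti-symplectic involutions with $R_\pm^* K_{\mu,c} = K_{\mu,c}$, the associated Hamiltonian flow $Fl^X_t$ of $X := X_{K_{\mu,c}}$ satisfies the reversal relation $R_\pm \circ Fl^X_t = Fl^X_{-t} \circ R_\pm$ for every $t$. Writing $x(t) := Fl^X_t(p_0)$ and $T := \tau_p(p_0)>0$, the hypothesis $p_0 \in I_p \subset C_+ = Fix(R_+)$ immediately gives $R_+(x(t)) = x(-t)$; this is already the symmetry identity required of a symmetric orbit through $p_0$, so the task reduces to closing the trajectory up into a periodic orbit.

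I would then handle the two possible types of interval $I_p$ separately. In the $(+)$-case, $s_p(p_0) = 0$ means the hitting point $q_0 := h_p(p_0) = x(T)$ lies in $C_+ = Fix(R_+)$; plugging $t = T$ into the $R_+$-reversal identity and using $R_+(q_0) = q_0$ gives $q_0 = x(T) = x(-T)$, and applying $Fl^X_T$ yields $x(2T) = p_0$, so the orbit is $2T$-periodic. In the $(-)$-case, $q_0 \in C_- = Fix(R_-)$; evaluating the $R_-$-reversal identity at $q_0$ and using $R_-(q_0) = q_0$ gives $R_-(x(T+\sigma)) = x(T-\sigma)$ for every $\sigma$. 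Composing with $R_+$ and using $R_+ \circ R_- = s$, together with the $R_+$-symmetry already established, yields $s(x(T+\sigma)) = x(\sigma - T)$. Since $s$ is a symplectic involution that commutes with the flow (as $K_{\mu,c} \circ s = K_{\mu,c}$), this rearranges to $x(u+2T) = s(x(u))$ for all $u$, and iterating once gives $x(4T) = s^2(p_0) = p_0$, so the orbit is $4T$-periodic and the identity $R_+(x(t)) = x(-t)$ still certifies symmetry.

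The algebra itself is routine; the mildly subtle point is that in the $(-)$-case the closure of the orbit passes through the deck involution $s$, so the minimal Levi-Civita period can be $4T$ rather than $2T$. In the Moser quotient $\R P^3 = \Sigma_{\mu,c}/s$ both cases collapse to a single symmetric orbit of period $2T$, which is consistent with the crossing-number-$2$ picture used elsewhere in Section~\ref{sec:proofs}. Positivity of the period is built into the definition of the first hitting time $\tau_p(p_0) > 0$, so one indeed obtains a genuine (non-constant) periodic orbit.
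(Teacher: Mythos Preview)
Your proof is correct and follows essentially the same route as the paper's: both arguments split into the $(+)$ and $(-)$ cases, use the anti-symplectic involutions $R_\pm$ to produce time-reversal symmetry, and obtain periods $2T$ and $4T$ respectively, with the $(-)$-case invoking the composition $R_+\circ R_-=s$. The only cosmetic difference is that the paper phrases the argument as gluing reflected orbit segments via ODE uniqueness, whereas you package the same content as the flow identity $R_\pm\circ Fl^X_t=Fl^X_{-t}\circ R_\pm$; these are equivalent formulations of one argument.
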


\begin{proof}
Assume that $p_0 \in I_p$ with $s_p(p_0)=0$, so there is an orbit segment $\gamma$ connecting $p_0$ to a point $q_0\in C_\pm$.
    For simplicity, we consider the case of an orbit segment to $q_0\in C_+$.
    Define
$$
\delta(t) = R_+\circ \gamma(-t) \text{ for } t\in [-\tau_p(p_0),0].
$$
Then $\delta(0)=\gamma(0)$ and $\delta(-\tau_p(p_0) ) =\gamma(\tau_p(p_0) )$. Furthermore, we have
$$
\dot \delta(t) = X_H\circ \delta(t),
$$
because of the chain rule and fact that $R_+$ is an anti-symplectic involution leaving $H$ invariant.
By uniqueness of solutions of ODEs, the orbit segments $\gamma$ and $\delta$ glue together to a periodic orbit with period $2\tau_p(p_0)$. 

The case of an orbit segment connecting $p_0$ to a point $q_0\in C_-$ follows in a similar way, but in this case both involutions $R_\pm$ need to be used. Specifically, the orbit segments $R_+\circ \gamma,\gamma,R_-\circ \gamma, R_-\circ R_+ \circ \gamma$ glue together to a periodic orbit with period $4\tau_p(p_0)$. 
\end{proof}

Finally, we refine the covering if necessary so that for each $I_p$, either one of the following is validated:
\begin{enumerate}
    \item $s_p$ has no zero in $I_p$, in which case there is no point $q \in I_p$ that is the initial point of a symmetric periodic orbit with crossing number $2$;
    \item $s_p$ has a zero in $I_p$, which we validate by checking that it has opposite signs at the two endpoints. This proves existence of a symmetric periodic orbit. 
\end{enumerate}
In the second case, we can further verify local uniqueness by computing the derivative of the slope function. Namely, we verify that the sign of the derivative of the slope function is constant on the entire interval $I_p$. 
The upshot of the construction is the following:
\begin{itemize}
    \item We obtain a cover of $C_+$ consisting of intervals $I_p$, for which the existence or non-existence of a zero of $s_p$ is validated.
    \item The number of intervals $I_p$ containing a zero of $s_p$ depends on the parameters $c$ and $\mu$.
\end{itemize}

\begin{remark}
We note that we saved on computation effort by using the additional symmetry corresponding to \eqref{eq:sympl_involution}. In short, we only construct half a cover of $C_+$; the remaining intervals are obtained by applying $s$.    
\end{remark}

\begin{remark}
    In the implementation, we first choose an interval which contains an approximate location $p_0\in C_\pm$ of a symmetric orbit as its midpoint, and include it as one of the intervals $I_p$ during the construction. The approximate location is either computed using a combination of grid search and Newton's method, or in the case of small $\mu$, the locations of the circular orbits of the rotating Kepler problem are used. This helps prevent alignment issues.
\end{remark}

The analogous construction for $C_-$ is considered as well. By considering these coverings we are guaranteed to find all potential symmetric periodic orbits with crossing number $2$, because for such orbits the intersections with $S_\pm$ always happen at the fixed point locus $C_\pm$.

\begin{proof}[Proof of Theorem~\ref{thm:crossing_axis}]
The parameter range $V$ is subdivided into small overlapping boxes $[\underline{\mu_i}, \overline{\mu_i}] \times [\underline{c_j}, \overline{c_j}]$, and we verify the above construction for each box.
For an interval $I_p$ containing a zero of $s_p$, we shoot until the first hitting point on $S_\pm$. Between time steps we enclose the trajectory and verify using Equation~\eqref{eq:retro_condition} the orbit to be either retrograde or direct.
The existence and uniqueness statements follow directly from the construction.

Note that all orbits found by this method intersect both $C_+$ and $C_-$. It follows that all symmetric periodic orbits with crossing number $2$ for $\mu, c \in V$ are non-contractible in $\R P^3$.

To verify transverse non-degeneracy, suppose $p_0\in I_p \subset C_+$ is a zero of $s_p$. We compute an enclosure of the first hitting time $\tau_p(p_0)$ with $C_-$. Doubling this, we obtain an enclosure for the duration $T_{p_0}$ of an orbit which projects to a periodic orbit in the unregularized problem or in the Moser regularization. Another orbit of interest is its double cover, which has minimal period $2T_{p_0}$ in the Levi-Civita regularization. We verify non-degeneracy for both cases separately.

To proceed, recall the $4\times 2$-frame matrix 
$F(x)=[\,U(x)\ V(x)\,]$,
where $U$ and $V$ are the vectors defined in Equation~\eqref{eq:frame_contact_structure}.
We compute the enclosure for the $4 \times 4$ monodromy matrix $M(T_{p_0})$, and apply the formula \eqref{eq:reduced_monodromy} for the reduced monodromy:
$$
\Psi(T_{p_0})=\Omega_{2}^T F( Fl^{X_K}_{T_{p_0}}(p_0)   )^T \Omega_{4} M(T_{p_0} ) F(p_0) \in Sp(2),
$$
where 
and $\Omega_2$ and $\Omega_4$ are matrix representations of the symplectic forms on $\R^2$ and $\R^4$, as defined in Section~\ref{sec:frames_invariants}.
Taking all relevant enclosures, we obtain an enclosure of $\Psi(T_{p_0})$. Non-degeneracy can then be shown by verifying $\det(\Psi(T_{p_0}) - \id_{2}) \neq 0$, or equivalently $\mathrm{tr}(\Psi(T_{p_0} ))\neq 2$.
\end{proof}

\begin{remark}
The retrograde property ceases to hold for all Jacobi energies, as indicated by Figure~\ref{fig:retrograde}. For example, for $\mu=0.9$ and $c=1.2$ (which is well above all critical values), the orbit found by this shooting method will slow down so much, that its apparent movement can appear direct.
\end{remark}

\begin{proof}[Proof of Theorem~\ref{thm:direct_existence}]
The parameter range is subdivided into overlapping boxes in a way that for each $\mu \in [0.1, 0.5]$, the minimum Jacobi energy $c$ such that $(\mu,c)$ is contained in one of the boxes is validated to be above the critical level. For each box, we take an interval $I_p \subset C_+$ which contains an approximate location of the direct orbit as its midpoint. We verify existence by showing $s_p$ has a zero in $I_p$. The transverse non-degeneracy and the direct properties are validated in the same way as in the proof of Theorem~\ref{thm:crossing_axis}.
\end{proof}

\subsection{Action theorems}
We will prove the following theorems concerning symmetric periodic orbits with an action bound.

\begin{theorem}[Uniqueness of retrograde orbit as a symmetric orbit]
\label{thm:action_retro}
Define $V = V_h \cup V_v$, with
\begin{align*}
    V_h & = \{ ( \mu, c) ~|~\mu \in [0, 0.5] \text{ and } c\in [2.1, 2.1 + 10^{-6}]  \} , \\
    V_v & = \{ ( \mu, c) ~|~\mu \in [0,10^{-5}] \text{ and } c\in [1.62, 2.1]  \} .
\end{align*}
Then for all $(\mu,c)\in V$ there is $A_{\mu,c}>0$ and a retrograde symmetric periodic orbit $\gamma_{\mu,c}$ such that every symmetric periodic orbit with action less than $A_{\mu,c}$ is a reparametrization of $\gamma_{\mu,c}$. 
Moreover, the orbit $\gamma_{\mu,c}$ is non-degenerate.
\end{theorem}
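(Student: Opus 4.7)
The plan is to combine the covering-and-shooting framework of Theorem~\ref{thm:crossing_axis} with rigorous action enclosures produced by the $C^1$-Lohner integrator of CAPD, and to separate candidate symmetric competitors to $\gamma_{\mu,c}$ by their crossing number. Since $V$ is contained in the parameter range of Theorem~\ref{thm:crossing_axis}, that theorem already supplies a unique non-degenerate symmetric retrograde orbit $\gamma_{\mu,c}$ with $\mathrm{cross}(\gamma_{\mu,c}) = 2$, and the non-degeneracy assertion of the present theorem follows at once. What remains is to exhibit $A_{\mu,c} > \mathcal{A}(\gamma_{\mu,c})$ such that no other symmetric periodic orbit on $\Sigma_{\mu,c}$ has action $\leq A_{\mu,c}$. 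I would first compute a validated upper enclosure of $\mathcal{A}(\gamma_{\mu,c})$ by integrating $\lambda(X_{K_{\mu,c}})$ along the already-enclosed orbit, and then tentatively set $A_{\mu,c}$ slightly above this enclosure, its admissibility to be verified by the next two steps.

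The competing symmetric periodic orbits split naturally by crossing number. Those with crossing number $2$ are, by Theorem~\ref{thm:crossing_axis}, either $\gamma_{\mu,c}$ itself or the symmetric direct orbit; for the latter I would compute a validated lower enclosure of its action and check that it exceeds $A_{\mu,c}$. This comparison is suggested by the rotating Kepler limit, where the direct circular orbit has strictly larger action than the retrograde one, and is to be certified uniformly on $V$ by interval arithmetic. For orbits with crossing number $\geq 4$ I would produce a separate estimate: using validated numerics on a cover of $\Sigma_{\mu,c}$, a positive lower bound $c_{\min}$ for $\lambda(X_{K_{\mu,c}})$ and a positive lower bound $\delta$ for the time elapsed between consecutive intersections of a flow line with $S_+ \cup S_-$. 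Together these yield
\[
\mathcal{A}(\gamma) \;\geq\; c_{\min}\,\delta\,(\mathrm{cross}(\gamma) - 2)
\]
for every symmetric periodic orbit $\gamma$, so that choosing $A_{\mu,c} < 2 c_{\min}\delta$ simultaneously rules out the entire family of crossing number $\geq 4$ orbits.

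The main obstacle will be arranging that the bound $2 c_{\min}\delta$, together with the direct orbit's action, actually lies above the computed enclosure of $\mathcal{A}(\gamma_{\mu,c})$: near the Levi-Civita origin and near the second primary the vector field $X_{K_{\mu,c}}$ can be fast, shrinking $\delta$, while $\lambda(X_{K_{\mu,c}})$ may simultaneously be small, so the naive global estimate may be far too pessimistic. I would mitigate this by adaptively refining the covering of $\Sigma_{\mu,c}$ in these regions, and by exploiting the symplectic involution $s$ from~\eqref{eq:sympl_involution} to halve the portion of $C_+ \cup C_-$ requiring explicit treatment. As a fallback, rather than relying on the global constants $c_{\min}$ and $\delta$, one can iterate the Poincar\'e return map to $S_+ \cup S_-$ along the covering of $C_+ \cup C_-$ from Theorem~\ref{thm:crossing_axis} and accumulate rigorous action increments over a few consecutive returns, which typically yields a much sharper lower bound than the global estimate and makes the verification of $A_{\mu,c}$ robust across all boxes in the subdivision of $V$.
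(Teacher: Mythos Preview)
Your strategy---splitting competitors by crossing number and then handling the two cases separately---is genuinely different from the paper's, but as written it has a gap and its fallback essentially collapses into the paper's method anyway.

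The gap is in the crossing-number-2 case on $V_v$. You invoke Theorem~\ref{thm:crossing_axis} to assert that the only symmetric periodic orbits with crossing number $2$ are $\gamma_{\mu,c}$ and the direct orbit, but part~(3) of that theorem is stated only for the \emph{horizontal} parameter strip $V_h$ of Theorem~\ref{thm:crossing_axis}; it does not cover the vertical strip $V_v$ (and likewise part~(2) only claims uniqueness of the direct orbit on $V_h$). So on $V_v$ you have not ruled out additional symmetric crossing-number-2 orbits, and comparing the action of ``the'' direct orbit with $A_{\mu,c}$ does not close the case. You would need a separate verification for $V_v$, which amounts to redoing a full covering computation there.

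For crossing number $\geq 4$, you correctly anticipate that the global estimate $c_{\min}\delta$ is too crude (indeed, there is no uniform positive lower bound $\delta$ on the time between consecutive intersections with $S_+\cup S_-$, since flow lines can graze these surfaces), and your fallback---iterating the return map from the covering of $C_+\cup C_-$ and accumulating action---is essentially what the paper does. The paper, however, dispenses with the crossing-number dichotomy entirely. It augments $X_{K_{\mu,c}}$ by the action integrand, then uses the observation that any symmetric orbit starting on a fixed-point locus returns to that locus at half period with half action. Starting from a cover of $C_+$ (and analogously $C_-$), it iterates the Poincar\'e return to $S_+$, recording at each return both the accumulated action and the slope function $s_p^j$; the iteration stops once the action exceeds $A_0/2$, where $A_0$ is the validated action of the retrograde orbit. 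Any symmetric orbit of action $\leq A_0$ would manifest as a zero of some $s_p^j$ before this cutoff. This single sweep handles all crossing numbers at once and avoids both the $V_v$ gap and the need for any global constants.
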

In short, in this parameter range, the retrograde orbit is the unique symmetric action minimizer.

\begin{theorem}[Uniqueness of small action symmetric orbits]
\label{thm:action_small} 
For all $\mu \in [0, 0.5]$ and $c\in [2.1, 2.1 +10^{-6}]$ there is $A_{\mu,c}>0$, a retrograde symmetric periodic orbit $\gamma^r_{\mu,c}$ and a direct symmetric periodic orbit $\gamma^d_{\mu,c}$ such that every symmetric periodic orbit with action less than $A_{\mu,c}$ is a reparametrization of $\gamma^r_{\mu,c}$ or of $\gamma^d_{\mu,c}$. 
Moreover, the orbits $\gamma^r_{\mu,c}$ and $\gamma^d_{\mu,c}$ are non-degenerate.
\end{theorem}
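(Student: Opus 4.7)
The plan mirrors the proof of Theorem~\ref{thm:action_retro} but now budgets action for both the retrograde and the direct symmetric orbit. Combining Theorem~\ref{thm:crossing_axis} and Theorem~\ref{thm:action_retro} on the smaller parameter box $V_h$, we already have the candidate orbits $\gamma^r_{\mu,c}$ and $\gamma^d_{\mu,c}$, both transversely non-degenerate and with crossing number two. The first step is to compute rigorous interval enclosures of their actions $A^r_{\mu,c}$ and $A^d_{\mu,c}$ by integrating $\int_0^T \lambda(X_H(x(t)))\,dt$ along the already validated orbits, and then to fix $A_{\mu,c}$ as a rigorously validated constant slightly larger than $\max(A^r_{\mu,c}, A^d_{\mu,c})$.

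Second, I would reuse the covering construction from the proof of Theorem~\ref{thm:crossing_axis}, but augment it with action tracking. Any symmetric periodic orbit hits $C_+ \cup C_-$, and by the $s$-symmetry of~\eqref{eq:sympl_involution} together with the $R_\pm$ symmetries it suffices to cover half of $C_+$ by intervals $I_p$. For each $I_p$, the CAPD-based shooting provides enclosures of the flow $Fl^{X_H}_t$, the associated slope function $s_p$ and its derivative, and, integrated in parallel, the accumulated action $\mathcal A_p(q,t) = \int_0^t \lambda(X_H(Fl^{X_H}_s(q)))\,ds$ for $q \in I_p$. Each transverse crossing of $S_\pm$ along the enclosure yields a candidate symmetric orbit whose total action is twice (or four times, if both $R_+$ and $R_-$ are needed) the accumulated action at that hitting time.

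Third, I would refine $I_p$ until one of the following is validated: (i) the slope function has no zero corresponding to any candidate crossing before the lower endpoint of the accumulated action enclosure exceeds $A_{\mu,c}$, ruling out symmetric orbits from $I_p$ below the action threshold; or (ii) $I_p$ contains a single zero at the first relevant crossing, whose action enclosure is $\leq A_{\mu,c}$, and whose initial condition is enclosed inside the validated enclosure of the initial point of either $\gamma^r_{\mu,c}$ or $\gamma^d_{\mu,c}$. Local uniqueness in case (ii) is certified by checking that the enclosed derivative of $s_p$ has constant nonzero sign on $I_p$, and the retrograde/direct classification is done with~\eqref{eq:retro_condition}. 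Non-degeneracy is reverified exactly as in Theorem~\ref{thm:crossing_axis}, by forming the reduced monodromy~\eqref{eq:reduced_monodromy} and checking $\mathrm{tr}(\Psi(T_{p_0})) \neq 2$ via Lemma~\ref{lemma:transverse_nondegeneracy}.

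The main obstacle is controlling the growth of the flow enclosures over the longer integration required here: because $A_{\mu,c}$ must accommodate the direct orbit, which has larger period than the retrograde one, one must shoot significantly further than in Theorem~\ref{thm:action_retro} before the lower bound of $\mathcal A_p$ surpasses $A_{\mu,c}$, and the $C^1$-Lohner enclosures can swell in the meantime. This forces adaptive subdivision of both the parameter boxes $(\mu,c) \in V_h$ and the intervals $I_p$, and is the reason the theorem is proved only on the very thin energy sliver $[2.1,2.1+10^{-6}]$. A secondary subtlety is that $\lambda(X_H)$ may take small values on portions of the trajectory (e.g.\ near apses), so the action enclosure must be propagated integral-by-integral rather than replaced by a naive rate-times-time lower bound; otherwise one cannot certify that longer candidate orbits indeed have action above $A_{\mu,c}$.
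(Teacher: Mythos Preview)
Your proposal is essentially the paper's own argument: compute validated action enclosures for the already-located retrograde and direct orbits, take $A_{\mu,c}$ to be (an upper bound for) their maximum, cover the symmetric locus, integrate the flow together with the action integrand, and at each successive return to $S_\pm$ test the slope function for zeros until the accumulated half-action exceeds $A_{\mu,c}/2$; non-degeneracy is inherited from Theorem~\ref{thm:crossing_axis}. The paper organizes the shooting as a chain of iterated Poincar\'e return maps $h_p^j : I_p^{j} \to I_p^{j+1} \subset S_+$ rather than tracking arbitrary $S_\pm$-crossings, but this is the same scheme you describe.

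One small gap: your claim that the $s$- and $R_\pm$-symmetries reduce the search to half of $C_+$ is not justified. The involution $s$ maps $C_+$ to itself and $C_-$ to itself, so it does cut $C_+$ in half, but no composition of $s, R_+, R_-$ interchanges $C_+$ with $C_-$. A purely $R_-$-symmetric orbit (one whose intersections with $\{q_2=0,\,p_1=0\}$ all have $q_1 < -\mu$) meets $C_-$ and never $C_+$, so it would be invisible to shooting from $C_+$ alone. To rule out all small-action symmetric orbits you must also run the analogous covering-and-shooting from $C_-$, as the paper does in the crossing-number argument; with that addition your plan matches the paper.
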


In order to compute enclosures of the action of the trajectories, we augment the Hamiltonian vector field by including the action $A$ as an additional variable. In other words, we integrate the augmented vector field
\[
\bar X_H =
\sum_{j=1}^2\left(\frac{\partial H}{\partial w_j}\frac{\partial }{\partial z_j}
-\frac{\partial H}{\partial z_j}\frac{\partial }{\partial w_j} \right)
+
 \sum_{j=1}^2 \left(w_j \frac{\partial H}{\partial w_j} + z_j \frac{\partial H}{\partial z_j} 
\right)
\frac{\partial}{\partial A}
.
\]
The last component is simply $(wdz-zdw)(X_H)$.

We first describe a construction which allows for a systematic search for all symmetric orbits below a given action bound $A_0$. The construction is based on the following observation: a symmetric orbit starting from a fixed point locus will arrive at the same fixed point locus at half period with half action, due to its symmetry. Hence, we shoot from a fixed point locus until the action exceeds $A_0/2$, and check whether the trajectory intersects the corresponding fixed point locus.

To proceed, we construct a cover of $C_+$ consisting of intervals $I_p$, together with a sequence of maps such that the following holds:
$$
I_p \stackrel{h_p^0}{\longrightarrow} I^1_p \stackrel{h_p^1}{\longrightarrow} I^2_p \stackrel{h_p^2}{\longrightarrow} I^3_p \stackrel{h_p^3}{\longrightarrow} \ldots
,
$$
where $I^j_p\subset S_+$ and $h_p^j$ are return maps to $S_+$ for each $j$, as implemented by the class \texttt{IPoincareMap} of the CAPD library.
For $j=0$, we also allow the possibility of $h_p^0$ being a composite $h_p^0 = h_p^{0,+} \circ h_p^{0,-}$, where $h_p^{0,-}$ is given by the first hitting point with $S_-$ and $h_p^{0,+}$ with $S_+$. This is necessary to ensure transverse intersections with $S_\pm$ at each step. For instance, if $I_p$ contains a point of the boundary of the Hill's region, the first intersection with $S_+$ will not be transverse. 
Note that if a trajectory starting at $C_+$ returns to $C_+$ before intersecting $S_-$, it will double to a periodic orbit which does not intersect $S_-$. Hence, by relaxing the condition on $h_p^0$ as above, we do not miss out on possible intersections with $C_+$.

Denoting the hitting times by $T_p^j$, we have the following relation:
$$
h_p^j \circ  h_p^{j-1} \circ \ldots \circ h_p^0 (q)
=
pr_{z_1,z_2;w_1,w_2}\circ Fl^{\bar X_H}_{T_p^j(q)}(q).
$$
Define the slope and action functions on each interval $I_p$ as
$$
A_p^j(q)
=
pr_{A}\circ Fl^{\bar X_H}_{T_p^j(q)}(q),
\qquad
s_p^j(q)
=
pr_{w_2}\circ Fl^{\bar X_H}_{T_p^j(q)}(q).
$$
For each $I_p$ we find the smallest $j_0$ such that $A_p^{j_0}(I_p) > A_0/2$. We obtain a sequence $I_p^j$ for $j=1,\ldots, j_0-1$ and look for zeros of the function $s_p^j$. The upshot is that any $R_+$-symmetric orbit with action less than $A_0$ will appear as a zero of $s_p^j$ for some $p$ and $j$.

\begin{proof}[Proofs of Theorems~\ref{thm:action_retro}~and~\ref{thm:action_small}] 
    Given an enclosure of the parameters $\mu$ and $c$, we first find enclosures for the location of the retrograde and direct orbits on which the existence of the orbits has been validated. By integrating the augmented vector field $\bar X_H$ starting from these enclosures, we obtain enclosures for the action of both orbits. We use the largest value $A_0$ of these enclosures as the action bound and verify the above construction.
    
    For Theorem~\ref{thm:action_retro}, we verify the construction with action bound $A_0$ corresponding to the retrograde orbit, and show that $s_p^j$ has a zero only at the interval which contains the retrograde orbit. For Theorem~\ref{thm:action_small}, the action bound $A_0$ corresponding to the direct orbit is used and we show $s_p^j$ has zeros only at the two intervals containing the retrograde and direct orbits, respectively.

    The non-degeneracy statement follows directly from Theorem~\ref{thm:crossing_axis}.
\end{proof}

\subsection{Validated index computations}
\label{sec:validated_index_computations}
Since the computation of the Conley-Zehnder index is an important topic in symplectic dynamics, we will show that this index can be computed directly without access to a family. 
{As an application, we can use the index to validate the existence of a bifurcation.}
For the sake of explicitness, we consider a specific example. 
\begin{theorem}
\label{thm:index}
    Consider $\mu=0.99$. There is a family of symmetric periodic orbits $\gamma_c$ with $c\in [1.57617, 1.57626]$ such that 
    \begin{itemize}
        \item $\gamma_c$ is non-degenerate
        \item $\gamma_c^2$ non-degenerate for $c=1.57617$ and for $c=1.57626$.
        \item the Conley-Zehnder index satisfies
        \[
        \mu_{CZ}(\gamma_c)=3,\quad 
        \mu_{CZ}(\gamma_c^2) =\begin{cases}
            5 & c=1.57626 \\
            6 & c=1.57617
        \end{cases} 
        \]
    \end{itemize}
\end{theorem}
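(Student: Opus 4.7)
The plan is to recycle the shooting/slope-function machinery developed in the proof of Theorem~\ref{thm:crossing_axis} and to add on top of it a rigorous angle-tracking step that implements the formula from Proposition~\ref{prop:index_computation}. Fix $\mu=0.99$ and subdivide the parameter interval $[1.57617, 1.57626]$ into small overlapping sub-intervals $[\underline{c_j},\overline{c_j}]$. For each such sub-interval, I would choose an interval $I_{p_j}\subset C_+$ (centered on an approximate location of the orbit obtained by a non-rigorous grid search plus Newton iteration) and verify validated existence of a zero of the slope function $s_{p_j}$ with non-vanishing derivative. This yields, via the implicit function theorem applied on the enclosures, a continuous one-parameter family $\{\gamma_c\}_{c \in [\underline{c_j},\overline{c_j}]}$ of symmetric periodic orbits. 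Patching over adjacent sub-intervals gives the global family on $[1.57617,1.57626]$; transverse non-degeneracy of $\gamma_c$ is verified in exactly the same way as in the proof of Theorem~\ref{thm:crossing_axis}, namely by computing the reduced monodromy $\Psi(T_c)$ via \eqref{eq:reduced_monodromy} on each sub-interval and checking $\mathrm{tr}(\Psi(T_c))\neq 2$.

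For the two distinguished parameter values $c_* \in \{1.57617,\,1.57626\}$ I would additionally integrate the augmented flow and the variational equation out to time $2T_{c_*}$, compute an enclosure for $\Psi(2T_{c_*})=\Psi(T_{c_*})^2$, and check $\mathrm{tr}(\Psi(2T_{c_*}))\neq 2$; this certifies non-degeneracy of $\gamma_{c_*}^2$. Observe that, because $\gamma_c$ is non-degenerate throughout, the index $\mu_{CZ}(\gamma_c^2)$ can only change across a parameter where $\mathrm{tr}(\Psi(T_c))=-2$; the contrasting values $5$ and $6$ at the two endpoints are therefore consistent with a birth-of-iterate bifurcation somewhere in the interior, but we only need rigorous statements at the two endpoints.

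The Conley-Zehnder computation itself proceeds orbit by orbit. Along $\gamma_{c_*}$ (resp.\ its double cover) I would discretize the time interval $[0,T_{c_*}]$ (resp.\ $[0,2T_{c_*}]$) into a partition $0=t_0<t_1<\dots<t_N=T_{c_*}$ fine enough that the CAPD $C^1$-Lohner integrator delivers a reliable enclosure of $\Psi(t_k)$ at each node, and in addition fine enough so that each enclosure of $\rho(\Psi(t_k))\in U(1)$, computed via formula~\eqref{eq:explicit_contraction}, has angular diameter strictly less than $\pi/2$ and so that consecutive enclosures overlap or are separated by less than $\pi/2$. This lets me produce a validated enclosure $[\theta_k]\subset \R$ of the continuous lift $\theta(t_k)$ by choosing, at each step, the unique branch of $\arg\rho(\Psi(t_k))$ within distance $\pi$ of $[\theta_{k-1}]$; the normalization $\theta(0)=0$ pins down $[\theta_0]$. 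The endpoint enclosure $[\theta_N]/\pi$ together with the sign of $\mathrm{tr}(\Psi(T_{c_*}))-2$ (respectively $\mathrm{tr}(\Psi(2T_{c_*}))-2$), both rigorously enclosed, then fix the Conley-Zehnder index via Proposition~\ref{prop:index_computation} provided the interval $[\theta_N]/\pi$ is strictly contained in an open interval of length $1$ centered on an integer of the correct parity.

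The main technical obstacle is exactly this angle-tracking: for the double cover the accumulated integration time is long enough that naive $C^1$ enclosures of the monodromy may become too wide to resolve the parity of $\lfloor \theta(T)/\pi\rfloor$, and the divisions by $\sqrt{(a+d)^2+(b-c)^2}$ in formula~\eqref{eq:explicit_contraction} can amplify this near the Maslov cycle. I would address this by (i) using the symplectic symmetry from the involutions $R_\pm$ to restart the linearized integration at the half-period symmetry point, effectively halving the integration length and the wrapping error; (ii) refining the partition adaptively until the width of each $[\theta_k]-[\theta_{k-1}]$ is well below $\pi/2$; and (iii) choosing the specific endpoints $c=1.57617$ and $c=1.57626$ so that the enclosures of $\mathrm{tr}(\Psi(2T_c))-2$ stay well clear of $0$, which is precisely what makes these two particular parameter values amenable to a rigorous index computation.
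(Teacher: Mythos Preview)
Your proposal is correct and follows essentially the same approach as the paper: validated existence of the orbit family via the shooting/slope-function argument, validated non-degeneracy via the trace of the reduced monodromy~\eqref{eq:reduced_monodromy}, and a validated index computation by enclosing $\rho(\Psi(t))$ step by step using~\eqref{eq:explicit_contraction}, lifting to the universal cover, and invoking Proposition~\ref{prop:index_computation}.

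The only noteworthy difference is in the final step. You require the terminal enclosure $[\theta_N]/\pi$ to lie in an open interval of length~$1$ centered on an integer of the correct parity; the paper instead covers $U(1)$ by the four half-planes $U_1,\dots,U_4$, forces each intermediate enclosure into one of them, and then uses Lemma~\ref{lemma:fibers} to know a priori that $\rho(\Psi(T_0))$ avoids $\pm 1$ depending on the sign of $\mathrm{tr}(\Psi(T_0))-2$. This lets the paper place $[\theta_N]$ in an interval $((k-1)\pi,(k+1)\pi)$ of length~$2\pi$ rather than~$\pi$, so its criterion tolerates a coarser terminal enclosure than yours. Your additional error-control ideas (restarting the variational integration at the half-period symmetry point, adaptive refinement) are sensible implementation details not spelled out in the paper.
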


The orbit family is shown in Figure~\ref{fig:index_family}. In this situation, the family of orbits $\gamma_c$ is non-degenerate, but it goes through a periodic doubling bifurcation; its double cover goes from elliptic to positive hyperbolic, causing the index jump. The phase portrait of the Poincar\'e map with section $q_2=0$ before and after the bifurcation is shown in Figure~\ref{fig:phase_portrait}.
{In Remark~\ref{rem:bifurcation_floer}, we will explain how the existence of a bifurcation can be deduced from the index change.} 

\begin{figure}[!t]
    \centering
    \includegraphics[width=0.5\textwidth]{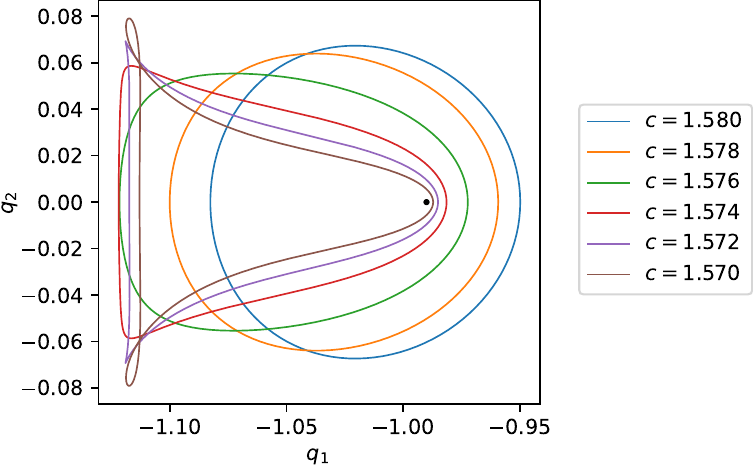}
    \caption{Family of periodic orbits for $\mu=0.99$ which goes through a period doubling bifurcation.}
    \label{fig:index_family}
\end{figure}

\begin{figure}[!t]
    \centering
    \includegraphics[width=0.8\textwidth]{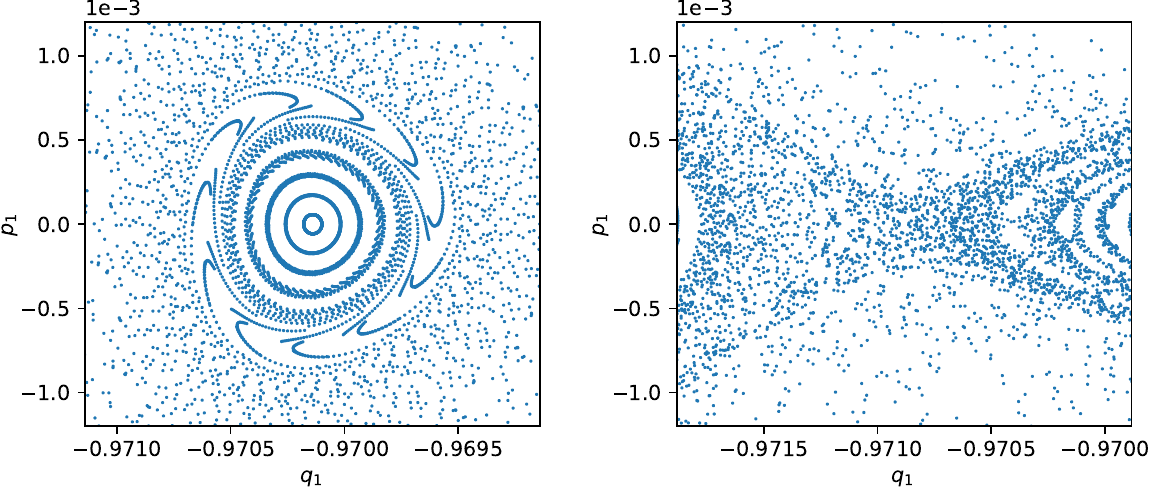}
    \caption{Phase portrait of the Poincar\'e map  with section $q_2=0$ in a neighborhood of the periodic orbit family of Theorem~\ref{thm:index}. The phase portrait before (left) and after (right) the period doubling bifurcation show the transition from an elliptic to a hyperbolic orbit.}
    \label{fig:phase_portrait}
\end{figure}

In what follows, we describe a general procedure for validated computation of the Conley-Zehnder index. The proof of Theorem~\ref{thm:index} will be a direct application of the procedure to the specific orbit and energy levels.

We first find enclosures of the initial condition $[q_0] \subset \C^2$ and period $[T_0] \subset \R$ of a non-degenerate, symmetric, periodic orbit $\gamma_0$.
We get a sequence of overlapping interval vectors $[q_j] \subset \C^2$, $[t_j] \subset \R$ and $[M_j]\subset Mat_{4\times 4}$ for $j=0, \ldots, N$ with $[0,T_0] \subset \cup_j [t_j]$ such that
\begin{itemize}
    \item $Fl^{X_H}_{[t_j]}(q_0) \subset [q_j]$. In particular, $\cup_j [q_j]$ is a neighborhood of the periodic orbit $\gamma_0$.
    \item $d_{q_0}Fl^{X_H}_{[t_j]} \subset [M_j]$
\end{itemize}
As before, define a symplectic frame
$F(x)=[\,U(x)\ V(x)\,]$ which we also enclose along the orbit.
Compute an enclosure at each time step for the path of symplectic matrices
$$
t \mapsto \Psi(t) = \Omega_{2}^T F( Fl^{X_K}_{t}(q_0)   )^T  \Omega_{4} M(t) F(q_0),
$$
which we write as $[\Psi([t_j])]$. Finally, compute the retraction $\rho$ to $U(1)$ using the explicit formula~\eqref{eq:explicit_contraction}, and denote the resulting enclosures as $[\rho(\Psi([t_j]))]$ for each $j$.

By choosing the time step smaller if necessary, we ensure that each enclosure $[\rho(\Psi([t_j]))]$ is contained in one of the four half-planes
\begin{align*}
U_1 = \{ (x,y) \in \R^2 ~|~x>0 \},
\qquad
& U_2 = \{ (x,y) \in \R^2 ~|~y>0 \},
\\
U_3 = \{ (x,y) \in \R^2 ~|~x<0 \},
\qquad
& U_4 = \{ (x,y) \in \R^2 ~|~y<0 \} .
\end{align*}
In addition, suppose the non-degeneracy of the orbit has been validated at the final time interval, i.e. $[\mathrm{tr}(\Psi([t_N]))] \neq 2$. Then we can and will ensure that the enclosure of the retraction of the monodromy at the end, namely $[\rho(\Psi([t_N]))]$, is contained in one of these four half-planes according to the following rules:
\begin{enumerate}
    \item if $[\mathrm{tr}(\Psi([t_N]))] < 2$, then $\rho(\Psi(T_0))\neq 1$ by Lemma~\ref{lemma:fibers}, so we choose the final time step sufficiently small such that $[\rho(\Psi([t_N]))]$ is contained in $U_2$, $U_3$ or $U_4$;
    \item if $[\mathrm{tr}(\Psi([t_N]))] > 2$, then $\rho(\Psi(T_0))\neq -1$ by Lemma~\ref{lemma:fibers}, in which case we choose the final time step such that $[\rho(\Psi([t_N]))]$ is contained in $U_1$, $U_3$ or $U_4$.
\end{enumerate}
Now we lift the path $t \mapsto \rho(\Psi(t))$ to a path $\theta:[0,T_0]\to\R$ in the universal cover, obtaining again enclosures $[\theta([t_j])]$. For lifting, we have used the coverings $U_1,\ldots, U_4$, which gives for each $j$ the inclusion
$$[\theta([t_j])] \subset \left(\frac{k_j-1}{2}\pi, \ \frac{k_j+1}{2}\pi\right), \quad k_j\in\Z$$ 
which is sufficient for our purposes. It is also possible to use the angle change of the enclosures $[\rho(\Psi([t_j]))]$ directly.
In any case, our choice of $U_i$ for the final enclosure $[\rho(\Psi([t_N]))]$ will ensure the following:
\begin{enumerate}
    \item if $[\mathrm{tr}(\Psi([t_N]))] < 2$, then $[\theta([t_N])]$ will lie in an open interval $( (k-1)\pi,(k+1)\pi)$ with $k$ an odd integer;
    \item if $[\mathrm{tr}(\Psi([t_N]))] > 2$, then $[\theta([t_N])]$ will lie in an open interval $( (k-1)\pi,(k+1)\pi)$ with $k$ an even integer.
\end{enumerate}
In both cases, it follows from Proposition~\ref{prop:index_computation} that $\mu_{CZ}(\psi)=k$.

{
\begin{remark}
    We note that this method to compute the Conley-Zehnder index can be applied to any starshaped level-set of a Hamiltonian on $\R^4$ with the standard symplectic structure. 
\end{remark}

\begin{proof}[Proof of Theorem~\ref{thm:index}]
    We subdivide the interval $[1.57617,1.57626]$ into small overlapping intervals. For each interval, we first compute non-rigorously an approximate value $z_0$ for the initial point $(z_0,0;0,w_2^-(z_0))$ of the periodic orbit $\gamma_c$. By considering the Poincar\'e map $h:C_+\to S_-$ and its derivative, we validate that on a neighborhood $I\subset \mathbb{R}$ of $z_0$ there is a unique zero of $s(z) = (pr_{w_2}\circ h)(z,0;0,w_2^-(z))$, establishing the existence of the family $\gamma_c$. Finally, we validate non-degeneracy of $\gamma_c$ using the linearized flow by the criterion of Lemma~\ref{lemma:transverse_nondegeneracy}.

    We repeat this process for fixed energies $c_1=1.57626$ and $c_2=1.57617$ to validate non-degeneracy of the double cover $\gamma_c^2$. We apply the above procedure for computing the Conley-Zehnder index to show that $\mu_{CZ}(\gamma_{c_1})=3$, $\mu_{CZ}(\gamma_{c_1}^2)=5$ and $\mu_{CZ}(\gamma_{c_2}^2)=6$. By non-degeneracy, it follows that $\mu_{CZ}(\gamma_c)=3$ for all $c\in[1.57617,1.57626]$.
\end{proof}

\begin{remark}
\label{rem:bifurcation_floer}
We briefly sketch how local Floer homology can be used to deduce the existence of a bifurcating orbit by applying it to the family $\gamma_c$ from Theorem 3.8.
Local Floer homology\footnote{Floer defined his homology theory, now known as Floer homology, \cite{Floer:fixed, salamon_zehnder_1992}, for non-degenerate (in the strong sense) periodic Hamiltonian orbits of time-dependent Hamiltonians. An extension to transversely non-degenerate periodic Hamiltonian orbits was described in \cite{Cieliebak_action}.}, described in detail in Section 3.2 of~\cite{Ginzburg_Conley_conjecture}, associates with a Hamiltonian periodic orbit a graded vector space, where the grading is given by the Conley-Zehnder index.
The key property we consider is its invariance for a uniformly isolated family of periodic orbits (see property LF1 in~\cite{Ginzburg_Conley_conjecture}).

We argue by contradiction and assume there is no orbit bifurcating out of the double cover $\gamma_c^2$.
This assumption implies that $\gamma_c^2$ is uniformly isolated for $c\in[1.57617, 1.57626]$. By the invariance property, the local Floer homology is independent of $c$ in this interval. 
However, from the index computations it follows (by Proposition~2.2 of~\cite{Cieliebak_action} and the index computations in section 2, 3 of \cite{Cieliebak_action}) that
\[
HF^{loc}_*(\gamma^2_{c=1.57626}) =
\begin{cases}
    \Z/2\Z & *=5,6 \\
    0 & \text{otherwise,}
\end{cases}
\quad
HF^{loc}_*(\gamma^2_{c=1.57617}) =
\begin{cases}
    \Z/2\Z & *=6,7 \\
    0 & \text{otherwise.}
\end{cases}
\]
Since these graded homology vector spaces are not isomorphic, we obtain a contradiction and conclude that there is at least one  orbit bifurcating out of $\gamma_c^2$.

Note that this argument proves the existence of a bifurcation with $C^1$-computations, but without the use of $C^r$-computations. We also remark that this type of argument applies specifically to Hamiltonian dynamical systems and provides less explicit information about the bifurcating orbit than the methods such as those given in~\cite{Wilczak_period_doubling}.
\end{remark}
}
\subsection{Proof of unknottedness and self-linking number}
We start with a simple lemma whose assumptions we can verify computationally.
\begin{lemma}
\label{lemma:box_isotopy}
Suppose there is a box $B \subset \{ (\mu,c)\in \R^2\}$, and intervals $Z, T \subset \R$ such that for all $(\mu,c)\in B$, the following holds.
\begin{itemize}
    \item There are $z \in Z$, $T_{\mu, c} \in T$ and an orbit $\gamma_{\mu,c}$ of $X_{K_{\mu,c}}$ such $\gamma_{\mu,c}$ is a non-degenerate symmetric $T_{\mu, c}$-periodic orbit with $\gamma_{\mu,c}(0)=(z,0;0,w^{\pm}_2(z))$.
    \item Furthermore, every symmetric periodic orbit passing through $(z',0;0,w^{\pm}_2(z'))$ with $z'\in Z$ of period $T'\in T$ is a reparametrization of $\gamma_{\mu,c}$.
\end{itemize}
Then, given a $C^r$-curve $s\mapsto (\mu_s,c_s)$, the map $s\mapsto T_{\mu_s, c_s}$ is of class $C^r$ and the map $s\mapsto \gamma_{\mu_s,c_s}(T_{\mu_s, c_s} \cdot \, )$ is a $C^r$-isotopy of circle embeddings.
\end{lemma}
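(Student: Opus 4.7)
The plan is to set up an implicit function theorem (IFT) argument adapted to the symmetry, using the uniqueness hypothesis to globalize along the curve. Fix $s_0$ and write $\gamma_0 := \gamma_{\mu_{s_0}, c_{s_0}}$ with initial point $x_0 = (z_0, 0; 0, w_2^{\pm}(z_0)) \in C_+$ and period $T_0 \in T$. By the symmetry hypothesis, $\gamma_0(T_0/2)$ lies in $Fix(R_+) \cup Fix(R_-)$; I will consider the case $\gamma_0(T_0/2) \in Fix(R_+)$ (the $Fix(R_-)$-case is handled with an analogous half-period map into $S_-$ as in the proof of Theorem~\ref{thm:crossing_axis}). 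Define the $C^\infty$ map
\[
G: Z \times T \times B \longrightarrow \R^2, \quad (z, T, \mu, c) \longmapsto \bigl( pr_{z_2}, pr_{w_1} \bigr) \circ Fl^{X_{K_{\mu, c}}}_{T/2}\bigl(z, 0; 0, w_2^{\pm}(z)\bigr).
\]
As in the proof of Theorem~\ref{thm:crossing_axis}, every zero of $G$ is the initial datum of a symmetric periodic orbit, and $G(z_0, T_0; \mu_{s_0}, c_{s_0}) = 0$.

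Next I would verify that $D_{(z, T)} G(z_0, T_0; \mu_{s_0}, c_{s_0}) \in \mathrm{Mat}_{2\times 2}(\R)$ is non-singular. Its $T$-column is $\tfrac{1}{2}$ times the $(z_2, w_1)$-projection of $X_{K_{\mu_{s_0}, c_{s_0}}}(\gamma_0(T_0/2))$, which is non-zero because $R_+$ is anti-symplectic and Hamiltonian-preserving, so $(dR_+) X_{K_{\mu,c}} = -X_{K_{\mu,c}}$, forcing $X_{K_{\mu,c}}$ to point transversely to $Fix(R_+)$ at any fixed point of $R_+$. A non-trivial kernel of the full $2 \times 2$ matrix would give an infinitesimal deformation of $x_0$ along $C_+$ whose image under $d Fl^{X_{K_{\mu,c}}}_{T_0/2}$ lies in $T Fix(R_+)$ modulo the flow direction. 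Conjugating with $R_+$ via the symmetry relation $R_+ \circ Fl^{X_{K_{\mu,c}}}_t = Fl^{X_{K_{\mu,c}}}_{-t} \circ R_+$ and concatenating with the second half-period yields a non-trivial $+1$-eigenvector of the full monodromy $d Fl^{X_{K_{\mu,c}}}_{T_0}\big|_{x_0}$ transverse to $X_{K_{\mu,c}}$, contradicting the transverse non-degeneracy hypothesis on $\gamma_0$.

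With non-singularity in hand, the implicit function theorem yields a $C^\infty$ map $(\mu, c) \mapsto (\tilde z(\mu, c), \tilde T(\mu, c))$ solving $G = 0$ on a neighborhood $U$ of $(\mu_{s_0}, c_{s_0})$ in $B$, and this solution is locally unique. By the uniqueness hypothesis of the lemma, for each $(\mu, c) \in U$ the pair $(\tilde z(\mu, c), \tilde T(\mu, c))$ must agree with the pair $(z, T_{\mu, c})$ determined by $\gamma_{\mu, c}$, so the function $(\mu, c) \mapsto T_{\mu, c}$ agrees with the $C^\infty$ function $\tilde T$ on $U$. Covering the compact image of the $C^r$-curve $s \mapsto (\mu_s, c_s)$ by finitely many such neighborhoods and patching the locally unique IFT solutions gives a global $C^\infty$ map $(\mu, c) \mapsto (z(\mu, c), T_{\mu, c})$ in a neighborhood of this image; precomposition with the $C^r$-curve then yields that $s \mapsto T_{\mu_s, c_s}$ is of class $C^r$.

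Finally, standard smooth dependence of the Hamiltonian flow on initial conditions and parameters makes $(s, \tau) \mapsto Fl^{X_{K_{\mu_s, c_s}}}_{\tau T_{\mu_s, c_s}}\bigl(z_s, 0; 0, w_2^{\pm}(z_s)\bigr)$ a $C^r$ map, which for each fixed $s$ parametrizes $\gamma_{\mu_s, c_s}$ over $S^1 = \R/\Z$. Non-degeneracy forces each $\gamma_{\mu_s, c_s}$ to be a prime periodic orbit without self-intersections in its image, hence a $C^\infty$ embedding of $S^1$ into $\Sigma_{\mu_s, c_s}$. This gives the desired $C^r$-isotopy of circle embeddings. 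The hardest step will be the linear-algebra verification in Step 2: translating transverse non-degeneracy of the full monodromy into non-singularity of the half-period Jacobian $D_{(z,T)}G$; everything else is a standard IFT-plus-continuity argument.
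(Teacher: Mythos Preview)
Your approach is correct and is essentially the same underlying idea as the paper's: the implicit function theorem applied to a non-degenerate periodic orbit, with the uniqueness hypothesis used to identify the IFT solution with the given family $\gamma_{\mu,c}$. The paper's own proof is a two-line citation of Theorem~2.2 in Moser--Zehnder (persistence of non-degenerate periodic orbits) together with the observation that the strong local uniqueness assumption makes the maps well-defined. You instead work out the IFT by hand on the half-period symmetric shooting map $G$, which has the pleasant feature that the symmetry is built into the setup from the outset, so the persisted orbit is automatically symmetric; the Moser--Zehnder route gives a persisting periodic orbit which a priori need not be symmetric, and one then appeals to the uniqueness hypothesis to match it with $\gamma_{\mu,c}$. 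Both routes are standard.

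One small correction: your final sentence asserts that transverse non-degeneracy forces $\gamma_{\mu_s,c_s}$ to be a \emph{prime} periodic orbit. This is not true in general (a double cover of a hyperbolic orbit is again non-degenerate, for instance). What is true, and sufficient here, is that the image of any periodic Hamiltonian orbit is an embedded circle because flow lines do not self-intersect; the parametrization $\tau\mapsto \gamma_{\mu_s,c_s}(T_{\mu_s,c_s}\tau)$ is then an embedding precisely when $T_{\mu_s,c_s}$ is the minimal period. In the paper's application the orbits have crossing number~$2$, hence minimal period, so the issue does not arise, but you should not attribute primality to non-degeneracy alone.
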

The strong local uniqueness assumptions guarantee that the maps are well-defined. The smoothness claim follows from Theorem~2.2 of \cite{moser_zehnder_notes_2005} and its proof, which involves the implicit function theorem.

We now come to the claim of interest.
\begin{proposition}
\label{prop:self_linking}
Let $\gamma^r_{\mu,c}$ and $\gamma^d_{\mu,c}$ denote the retrograde and direct orbits of Theorem~\ref{thm:action_small}. Then these orbits are unknotted and have self-linking number $-1$.    
\end{proposition}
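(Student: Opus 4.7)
The plan is to apply Lemma~\ref{lemma:box_isotopy} to reduce to a single reference parameter and then analyze the orbit there explicitly. Theorem~\ref{thm:action_small} supplies exactly the hypotheses of the lemma over the full parameter box $B=[0,0.5]\times[2.1,2.1+10^{-6}]$: existence of $\gamma^{r/d}_{\mu,c}$, non-degeneracy, and the crucial local-uniqueness statement (obtained by shrinking the intervals $Z$ and $T$ so that every symmetric orbit through an endpoint in $Z$ with period in $T$ has action below $A_{\mu,c}$, and hence must be $\gamma^r$ or $\gamma^d$). The enclosures for the initial point on $C_+$ and for the half-period are by-products of the covering argument of Section~\ref{sec:proofs}. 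Because the frame vectors $U,V$ of~\eqref{eq:frame_contact_structure} depend smoothly on $(\mu,c)$, both the knot type in $S^3\cong\Sigma_{\mu,c}$ and the self-linking number $\lk(\gamma, Fl^U_\epsilon\circ\gamma)$ are preserved under the resulting framed ambient isotopy along any $C^1$ path inside $B$.

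Having fixed such a path to the boundary corner $(\mu,c)=(0,c_0)$, the next step is to identify the two orbits at this reference parameter. At $\mu=0$ the Jacobi Hamiltonian reduces to the rotating Kepler problem, whose symmetric orbits crossing the $q_1$-axis exactly twice and of action below $A_{0,c_0}$ are precisely the two circular Kepler orbits (one for each rotation sense). In sidereal coordinates such an orbit has the form $\bar q(t)=r e^{i\omega t}$ with $\omega^2 r^3=1$. Conjugation by $R_t$ and then the Levi-Civita map $q=2z^2$, $p=w/\bar z$ produce an explicit parametrization
\[
(z(t),w(t))=\bigl(a\,e^{i\alpha t},\, b\,e^{i(\alpha t+\pi/2)}\bigr)\in\Sigma_{0,c_0}\subset\C^2,\quad \alpha=\tfrac{\omega-1}{2},
\]
with $a=\sqrt{r/2}$ and $b=r|\omega|\sqrt{r/2}$. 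This is a Hopf fiber on the Clifford-type torus $\{|z|=a,|w|=b\}\subset\Sigma_{0,c_0}$, hence an unknot in $\Sigma_{0,c_0}\cong S^3$. Its image in $\R P^3$ after quotienting by $s:(z,w)\mapsto(-z,-w)$ is non-contractible, consistent with the final statement of Theorem~\ref{thm:crossing_axis}.

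Finally, one computes the self-linking number directly from this parametrization. Evaluating $U=J\nabla H/\Vert\nabla H\Vert$ along $\gamma(t)$ yields a trigonometric vector field, and the linking number with the small $U$-pushoff reduces to the winding number of $U(\gamma(t))$ relative to the tangent framing $(\dot z,\dot w)$ along the Clifford torus. This winding number is expected to evaluate to $-1$, in agreement with the fact that Hopf fibers of the standard contact $S^3$ have self-linking $-1$; indeed the frame $U$ is smoothly isotopic, through frames of the symplectic complement $L^\omega$ along the orbit, to the standard contact framing on the round $S^3$.

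The main obstacle is precisely the sign bookkeeping in this final step: several conventions interact (the squaring $q=2z^2$, the prescription $X_H=I\nabla H$, and the specific $J,K$ matrices defining the frame), so it is easy to introduce a spurious sign. If the direct algebraic evaluation proves unwieldy, a clean alternative is to verify the self-linking at a single reference parameter via the validated numerics of Section~\ref{sec:validated_numerics}: enclose $\gamma$ and its $U$-pushoff, exhibit a bounding disk (for instance, using the $s$-symmetry of~\eqref{eq:sympl_involution} to glue a half-orbit arc to its antipodal image) to certify unknottedness, and rigorously count signed disk-intersections to obtain $sl=-1$. The isotopy of the first step then transports both conclusions to every $(\mu,c)\in B$.
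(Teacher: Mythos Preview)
Your isotopy reduction to $\mu=0$ matches the paper's strategy, though the paper is more careful on one point: Lemma~\ref{lemma:box_isotopy} is applied to each of the small boxes produced in the proof of Theorem~\ref{thm:action_small}, and the resulting local isotopies are patched using the uniqueness-up-to-action statement. Applying the lemma to all of $B$ at once, as you propose, would require a single interval $Z$ containing the initial points of $\gamma^r_{\mu,c}$ for every $\mu\in[0,1/2]$; such a $Z$ is wide enough that the uniqueness hypothesis of the lemma fails (the direct orbit, for instance, will also land in it).

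At $\mu=0$ the approaches diverge. The paper does no computation: it invokes McGehee's thesis \cite{McGehee_PhD}, which constructs a disk-like global surface of section for the rotating Kepler problem with the retrograde orbit as binding and observes that the roles of retrograde and direct can be exchanged. A binding of a disk-like section is automatically unknotted with $sl=-1$ by Obstruction~3 in Chapter~9 of \cite{frauenfelder-koert}, and the proof is over. Your Hopf-fiber parametrization is correct in spirit and does give unknottedness (modulo the sign slip: with the paper's conventions one gets $\alpha=(\omega+1)/2$), but the self-linking step is not actually carried out. Writing that the winding number ``is expected to evaluate to $-1$'' and that the frame $U$ ``is smoothly isotopic \ldots\ to the standard contact framing'' is not a proof. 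The latter claim is true---any two global trivializations of the contact plane bundle on $S^3$ differ by a map $S^3\to Sp(2)\simeq S^1$, and $[S^3,S^1]=0$---and supplying that one-line argument would close the gap cleanly; the validated-numerics alternative you sketch is too vague to stand on its own. The simplest fix is to cite McGehee as the paper does.
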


\begin{proof}
We take the horizontal curve $s\mapsto(\mu_s=s, c_s=c_0)$ for $s\in [0,1/2]$ with $c_0\in[2.1, 2.1+10^{-6}]$. Take the cover by boxes used in the proof of Theorem~\ref{thm:action_small}. Note that these boxes together with either the retrograde or the direct orbit family satisfy the assumptions of Lemma~\ref{lemma:box_isotopy}, giving us an isotopy on each box. Since we have checked uniqueness of the retrograde and direct orbits up to an action bound, the local isotopies patch together to isotopies $s \mapsto \gamma^r_{\mu_s,c_s}$ and $s \mapsto \gamma^d_{\mu_s,c_s}$ defined globally for $s\in [0,1/2]$. Thus, the retrograde and direct orbit families give rise to an isotopy of knots, all of which are reparametrizations of Reeb orbits. This means that unknottedness and the self-linking number are unchanged during isotopy, see Remarks~3.5.29 from \cite{geiges_introduction_2008}.

For $s=0$ we will check that the retrograde and direct orbit are unknotted and have self-linking number $-1$ by combining arguments from the literature.
McGehee has constructed a disk-like global surface of section in \cite{mcgehee_phd_thesis_1969} for $\mu=0$ with the retrograde orbit as binding orbit. We see that the retrograde orbit is unknotted, because we have a disk as Seifert surface.
{Furthermore, Obstruction~3 to the existence of disk-like global surfaces in Chapter~9 of \cite{frauenfelder_restricted_2018} asserts that the binding $\gamma$ of a disk-like surface of section to a Reeb flow on $S^3$ always has self-linking number $s\ell(\gamma)=-1$: this follows from the transversality of the flow to the interior and that fact that the Reeb flow is tangent to the boundary for a global surface of section.}

For $\mu=0$, the roles of the retrograde and direct orbit can be reversed as pointed out on page 29 of \cite{mcgehee_phd_thesis_1969}. This shows that the direct orbit is unknotted and has self-linking number $-1$. By the reasoning above, we conclude that the entire family of retrograde orbits and direct orbits (as a function of $\mu$) is unknotted and has self-linking number $-1$.
\end{proof}

\begin{proof}[Proof of Theorem~\ref{thm:small_action_unknotted_and_sl}]
Statements (1) and (2) follow from Theorem~\ref{thm:action_small} and Proposition~\ref{prop:self_linking}, respectively. For (3), we note that the proof of Proposition~\ref{prop:self_linking} gives non-degenerate families of periodic orbits, namely one family of retrograde orbits and another family of direct orbits. The Conley-Zehnder index is constant for each family. The value of this constant can be obtained by using the method of Section~\ref{sec:validated_index_computations}, giving us 3 and 5 for the retrograde and direct orbit, respectively.
Statement (4) can be proved similarly as non-degeneracy in the Moser regularization follows immediately from non-degeneracy in the Levi-Civita regularization, giving index 1 and 3 for the retrograde and direct orbit, respectively. 
\end{proof}

\section{Appendix}
\label{sec:appendix}
In this section we explain how to verify convexity of the compact component $\Sigma_{\mu,c}$ of $K_{\mu,c}^{-1}(0)$.
We start by observing that the Levi-Civita Hamiltonian can be rewritten as
\begin{equation}
    \label{eq:quad_form_LC_ham }
    K_{\mu,c}
=\frac{1}{2}|w+A(z)z|^2+ z^tB(z) z 
 -\frac{1-\mu}{2},
\end{equation}
where
$$
A(z)=
\left(
\begin{array}{cc}
0 & -(2|z|^2+\mu)\\
(2|z|^2-\mu) & 0
\end{array}
\right)
$$
and 
$$
B(z)=
\left(
\begin{array}{cc}
c-\frac{1}{2}(2|z|^2-\mu)^2 -\frac{\mu}{|2z^2-1|} & 0 \\
0 & c-\frac{1}{2}(2|z|^2+\mu)^2 -\frac{\mu}{|2z^2-1|}
\end{array}
\right).
$$

We need some crude a priori bounds on $|z|$ and $|w|$ for $(z,w)\in\Sigma_{\mu,c}$. Assume that $-c <H(L_1)$. To get a bound on $|z|$, we will work with the unregularized problem and bound $|q+\mu|$.
We recall from \cite{albers_contact_2012} that the maximal distance from $(-\mu,0)$ to the boundary of the Hill's region is attained on the $x$-axis at a point $(x_{\mu,c},0)$, where $x_{\mu,c} \in (-\mu,1-\mu)$, so $d_{\mu,c}:=x_{\mu,c}+\mu>0$.
\begin{lemma}
For $c\geq 2.1$ and $\mu \in [0,1/2]$, we have $d_{\mu,c}\leq \frac{3-2\mu}{5}$.
In particular $2|z|^2\leq 3/5$.
\end{lemma}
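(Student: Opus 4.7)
The plan is to reduce the claim to a one-variable inequality along the segment between the primaries, and then verify it by elementary calculus. Parameterize this segment by $d = q_1 + \mu \in (0, 1)$; the Hill's region boundary equation $U(q_1, 0) = -c$ then becomes $f(d) = c$ with
\[
f(d) := \frac{(d-\mu)^2}{2} + \frac{1-\mu}{d} + \frac{\mu}{1-d}.
\]
A direct differentiation gives $f'(d) = -U'(q_1, 0)$, so the unique critical point $d^\star(\mu) \in (0,1)$ of $f$ is the $d$-coordinate of $\ell_1$; moreover $f''(d) = 1 + 2(1-\mu)/d^3 + 2\mu/(1-d)^3 > 0$, so $f$ is strictly convex, strictly decreasing on $(0, d^\star]$, and tends to $+\infty$ at both endpoints of $(0,1)$. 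Under the standing hypothesis $-c < H(L_1)$, the value $d_{\mu,c}$ is the smaller root of $f(d) = c$, hence lies in $(0, d^\star(\mu))$.

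I claim that the lemma then follows from the one-variable inequality
\[
g(\mu) := f\!\bigl(\tfrac{3-2\mu}{5}\bigr) = \frac{(3-7\mu)^2}{50} + \frac{5(1-\mu)}{3-2\mu} + \frac{5\mu}{2(1+\mu)} \;\le\; 2.1 \quad \text{for all } \mu \in [0, \tfrac{1}{2}].
\]
Indeed, if $(3-2\mu)/5 \ge d^\star(\mu)$ then $d_{\mu,c} < d^\star \le (3-2\mu)/5$ trivially; otherwise $(3-2\mu)/5$ sits in the interval where $f$ is strictly decreasing, and the chain $c \ge 2.1 \ge g(\mu) = f((3-2\mu)/5)$ combined with monotonicity forces $d_{\mu,c} \le (3-2\mu)/5$. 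The Levi-Civita identity $|q + \mu| = 2|z|^2$ and the stated maximum-distance property of $d_{\mu,c}$ then yield $2|z|^2 \le (3-2\mu)/5 \le 3/5$, which is the remaining claim.

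The one-variable inequality itself is a direct calculus exercise. Compute
\[
g''(\mu) = \frac{49}{25} - \frac{20}{(3-2\mu)^3} - \frac{5}{(1+\mu)^3};
\]
on $[0, 1/2]$ the bounds $(3-2\mu)^3 \le 27$ and $(1+\mu)^3 \le 27/8$ give $20/(3-2\mu)^3 + 5/(1+\mu)^3 \ge 60/27 > 49/25$, so $g$ is concave. A rational computation yields $g'(1/2) = 7/50 - 5/4 + 10/9 = 1/900 > 0$; by concavity, $g'(\mu) \ge g'(1/2) > 0$ throughout $[0, 1/2]$, so $g$ is increasing. Hence $g(\mu) \le g(1/2) = 1/200 + 5/4 + 5/6 = 1253/600 < 1260/600 = 2.1$. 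The main delicate point to watch is the very tight margin $2.1 - g(1/2) = 7/600 \approx 0.012$: any lossy estimate, such as crude pointwise bounds on the derivative $g'$, would fail, but the exact rational arithmetic used above goes through cleanly.
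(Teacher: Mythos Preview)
Your proof is correct and follows essentially the same approach as the paper: both evaluate the negative effective potential on the segment between the primaries at $d=(3-2\mu)/5$ and show the resulting function of $\mu$ is maximized at $\mu=1/2$ with value $1253/600<2.1$. You supply more detail than the paper---the convexity of $f$, the case split on the position of $(3-2\mu)/5$ relative to $d^\star$, and the concavity/monotonicity argument for $g$---where the paper simply asserts the maximum is attained at $\mu=1/2$.
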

\begin{proof}
From the above observations on the Hill's region, it suffices to find a point on the axis between the two primaries that lies outside the Hill's region for $c\geq 2.1$.
Restricting the effective potential to the line segment between the primaries, we can write the negative effective potential as
$$
\bar U = \frac{1}{2}r^2 +\frac{1-\mu}{r} +\frac{\mu}{1-r} -\mu r +\frac{1}{2}\mu^2
$$
with $r\in [0,1]$. Plugging in $r=\frac{3-2\mu}{5}$, and bounding the maximum of this function of $\mu$, we see that maximum is attained at $\mu=1/2$, which equals $1253/600<2.1$, so the point $(\frac{3-2\mu}{5},0)$ lies outside the Hill's region for $c\geq 2.1$ and $\mu \in [0,1/2]$.
\end{proof}

\begin{lemma}
For $c\geq 2.1$ and $\mu \in [0,1/2]$, we have $|w| \leq 2$.
\end{lemma}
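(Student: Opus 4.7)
The plan is to exploit the quadratic form rewriting \eqref{eq:quad_form_LC_ham } of the Levi-Civita Hamiltonian. On $\Sigma_{\mu,c}=K_{\mu,c}^{-1}(0)$ this rearranges to
\[
\tfrac{1}{2}|w+A(z)z|^2 \;=\; \tfrac{1-\mu}{2} - z^t B(z)z.
\]
So if I can show $z^t B(z)z\ge 0$ throughout $\Sigma_{\mu,c}$, then $|w+A(z)z|^2\le 1-\mu\le 1$, and combining with a direct bound on $|A(z)z|$ via the triangle inequality $|w|\le|w+A(z)z|+|A(z)z|$ will give the claim.

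For positivity of $z^t B(z)z$, I would invoke the previous lemma, which gives $2|z|^2\le 3/5$ (the statement is proved for $c\ge 2.1$, and I expect an essentially identical argument to cover $c\ge 2$ at the cost of a very slightly weaker numerical constant). From $2|z|^2\le 3/5$ it follows that $|2z^2-1|\ge 1-2|z|^2\ge 2/5$, hence $\mu/|2z^2-1|\le 5/4$, and also $(2|z|^2\pm\mu)^2\le(11/10)^2=121/100$. Each diagonal entry of $B(z)$ is therefore bounded below by
\[
c-\tfrac{1}{2}\cdot\tfrac{121}{100}-\tfrac{5}{4} \;=\; c-\tfrac{371}{200} \;\ge\; \tfrac{29}{200} \;>\;0
\]
for $c\ge 2$. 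Thus $B(z)$ is (strictly) positive definite on $\Sigma_{\mu,c}$, and in particular $z^tB(z)z\ge 0$, giving $|w+A(z)z|\le\sqrt{1-\mu}\le 1$.

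Next I would estimate $|A(z)z|$ directly from the explicit form of $A(z)$:
\[
|A(z)z|^2 \;=\; (2|z|^2-\mu)^2 z_1^2+(2|z|^2+\mu)^2 z_2^2 \;\le\; (2|z|^2+\mu)^2|z|^2 \;\le\; \tfrac{121}{100}\cdot\tfrac{3}{10}\;=\;\tfrac{363}{1000},
\]
so $|A(z)z|<0.61$. Combining, $|w|\le 1+0.61<2$, which is the desired bound.

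The only real obstacle is the dependence on the previous lemma's a priori bound on $|z|$. That lemma is stated for $c\ge 2.1$, while here the hypothesis is $c\ge 2$; I would either rerun its proof verifying that the test point $r=(3-2\mu)/5$ still lies outside the Hill region for the larger range of parameters (which amounts to checking the same polynomial inequality with the constant $2.1$ replaced by $2$), or sharpen to a marginally larger upper bound on $|z|^2$ that still makes the $c-371/200$ estimate positive. Everything else reduces to the elementary quadratic and triangle-inequality manipulations above.
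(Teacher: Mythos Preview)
Your approach is essentially identical to the paper's: use the quadratic-form rewriting \eqref{eq:quad_form_LC_ham }, invoke the previous lemma to conclude $B(z)$ is positive definite, deduce $|w+A(z)z|\le\sqrt{1-\mu}$, and finish with the triangle inequality and a direct bound on $|A(z)z|$. The paper's proof is terser (it does not spell out the numerical estimates for the diagonal entries of $B(z)$ or for $|A(z)z|$), but the logic is the same; you have also correctly flagged the mismatch between the hypothesis $c\ge 2$ here and $c\ge 2.1$ in the previous lemma, which the paper simply glosses over.
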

\begin{proof}
Note that the previous lemma implies that $|2|z|^2\pm \mu| \leq 3/5+1/2$ and $|2z^2-1|\geq 2/5$, so $B(z)$ is positive-definite for $(z,w)\in \Sigma_{\mu,c}$.
Hence \eqref{eq:quad_form_LC_ham } tells us that $|w+A(z)z|\leq \sqrt{1-\mu}$, so
$$
|w| \leq \sqrt{1-\mu} +|A(z)z|\leq 2,
$$
where we have again used that $2|z|^2\leq 3/5$.
\end{proof}

We know that $\Sigma_{\mu,c}$ is diffeomorphic to $S^3$ and we can trivialize $T \Sigma_{\mu,c}$ using the frame $v_1=I\nabla H, v_2=J \nabla H, v_3=K \nabla H$ from Section~\ref{sec:frames_invariants}. Define the tangential Hessian as the restriction of the Hessian of $K_{\mu,c}$ to $T\Sigma_{\mu,c}$. With respect to the quaternionic frame, we have the following matrix representation.
$$
THess_{ij}:= v_i^t Hess(K_{\mu,c}) v_j.
$$
\begin{lemma}
At the collision set the tangential Hessian is positive definite for all $c\geq -H(L_1)$.
\end{lemma}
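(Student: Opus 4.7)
The plan is to compute the tangential Hessian directly at an arbitrary collision point in the quaternionic frame and verify positive definiteness via Sylvester's criterion. I would proceed in three steps.

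First, I would specialize everything to $z=0$. The equation $K_{\mu,c}(0,w)=0$ forces $|w|^2 = 1-\mu$, and direct inspection of~\eqref{eq:LC_hamiltonian} gives
\[
\nabla K|_{z=0} = (-\mu w_2,\, -\mu w_1,\, w_1,\, w_2).
\]
The Hessian at $z=0$ has a clean block structure: the pure $w$-block is $I_2$, the pure $z$-block is $(2c-2\mu)I_2$ (using $-\mu|z|^2/|2z^2-1| = -\mu|z|^2 + O(|z|^4)$ together with $c|z|^2$), and the mixed $zw$-block comes solely from the Coriolis term $-\mu(z_1w_2+z_2w_1)$, with antidiagonal entries equal to $-\mu$.

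Second, I would apply $I, J, K$ to $\nabla K$ to produce the tangent frame $v_1, v_2, v_3$ of Section~\ref{sec:frames_invariants}, and compute the symmetric $3\times 3$ matrix $T_{ij} = v_i^t\,\mathrm{Hess}(K)\,v_j$. This is mechanical once the normalization $w_1^2+w_2^2 = 1-\mu$ is used. Writing $\gamma := 2c - 2\mu - \mu^2$, the outcome is
\[
T = \begin{pmatrix}
\gamma(1-\mu) & \mu\gamma(w_2^2-w_1^2) & 0 \\
\mu\gamma(w_2^2-w_1^2) & \bigl[2\mu^2(c-\mu+1)+1\bigr](1-\mu) & 2\mu\gamma\, w_1w_2 \\
0 & 2\mu\gamma\,w_1w_2 & \gamma(1-\mu)
\end{pmatrix}.
\]
The algebraic identity that makes the argument clean is
\[
T_{12}^2 + T_{23}^2 \;=\; \mu^2\gamma^2(w_1^2+w_2^2)^2 \;=\; \mu^2\gamma^2(1-\mu)^2,
\]
so expansion along the third column and direct cancellation give $\det T = \gamma^2(1-\mu)^3(\mu^2+1)^2$.

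Third, I would apply Sylvester's criterion. The leading $1\times 1$ minor is $\gamma(1-\mu)$; the $2\times 2$ minor $T_{11}T_{22} - T_{12}^2$ is bounded below by $T_{11}T_{22} - (T_{12}^2 + T_{23}^2) = \det T / T_{11} > 0$; and the full determinant is as just computed. All three positivity conditions thus reduce to the single inequality $\gamma > 0$.

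The main obstacle is precisely this last point: showing $\gamma = 2c - 2\mu - \mu^2 > 0$ whenever $c \geq -H(L_1)$. The a priori bounds in the surrounding lemmas restrict attention to $\mu \in [0, 1/2]$, where $2\mu + \mu^2 \leq 5/4$. On this range, $-H(L_1) \geq 2$ (with equality at $\mu = 1/2$, where $\ell_1 = 0$ and the effective potential at the origin equals $-2$), so $\gamma \geq 2\cdot 2 - 5/4 = 11/4 > 0$, which closes the argument.
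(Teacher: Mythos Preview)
Your computation of the tangential Hessian at the collision set is correct and matches the paper's (with $|w|^2=1-\mu$ substituted). Your route via Sylvester's criterion differs from the paper's: the paper computes $\det T=|w|^6\kappa^2(\mu^2+1)^2>0$, then uses connectedness of the collision circle to reduce positive-definiteness to the single point $w_2=0$, where the matrix block-diagonalizes. Both approaches ultimately hinge on the same inequality $\gamma=\kappa=2c-2\mu-\mu^2>0$; your identity $T_{12}^2+T_{23}^2=\mu^2\gamma^2|w|^4$ makes the Sylvester check transparent and arguably cleaner than the connectedness detour.

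There is, however, a genuine error in your final step: the claim that $-H(L_1)\geq 2$ for all $\mu\in[0,1/2]$ is false. You have the value at $\mu=1/2$ right, but as $\mu\to 0^+$ the collinear Lagrange points merge into the critical circle of the rotating Kepler problem and $-H(L_1)\to 3/2$; for instance at $\mu=0.01$ one finds $-H(L_1)\approx 1.58$. The minimum over $[0,1/2]$ is approached as $\mu\to 0$, not attained at $\mu=1/2$. The repair is easy: since $\ell_1$ lies strictly between the primaries, writing $\ell_1=-\mu+r$ with $r\in(0,1)$ gives
\[
-H(L_1)=-U(\ell_1)\;\geq\;\frac{1-\mu}{r}+\frac{\mu}{1-r}\;>\;(1-\mu)+\mu\;=\;1,
\]
while for $\mu\in[0,1/2]$ one only needs $c>\mu+\mu^2/2\leq 5/8$. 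Hence $\gamma>2\cdot 1-5/4=3/4>0$, and your Sylvester argument then goes through unchanged. (The paper itself simply asserts $\kappa(\mu^2+1)^2>0$ without justification, so your instinct to address this point explicitly was good.)
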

\begin{proof}
The collision set corresponds to $z=0$. On this set the matrix representation of the tangential Hessian reduces to
\[
\left(
\begin{array}{ccc}
 \kappa|w|^2 &\mu \kappa  (w_2^2-w_1^2) &0\\ 
\mu \kappa (w_2^2-w_1^2)& 2 |w|^2
 \left( 1/2-{\mu}^{3}+ \left( c+1 \right) {\mu}^{2} \right) &2\kappa\mu w_1w_2 
\\
0&2\kappa\mu w_1w_2 & \kappa |w|^2
\end{array}
\right)
\]
where $\kappa = (2c-{\mu}^{2}-2\mu )$.
We note that the determinant of this matrix is
$$
|w|^6 \kappa^2 (\mu^2+1)^2 > 0.
$$
The collision locus is a circle, so connected. Hence it suffices to check positive-definiteness at a single point. To check positive-definiteness in a single point, set $w_2=0$. Then the Hessian reduces to
\[
w_1^2
\left(
\begin{array}{ccc}
 \kappa & -\mu \kappa  &0\\ 
-\mu \kappa & 2 \left( 1/2-{\mu}^{3}+ \left( c+1 \right) {\mu}^{2} \right) &0
\\
0&0 & \kappa 
\end{array}
\right)
.
\]
The upper $2\times 2$-block has determinant $\kappa (\mu^2+1)^2>0$ and its trace is positive, so all eigenvalues are positive. 
This proves the lemma.
\end{proof}

\begin{proposition}
    The Gauss-Kronecker curvature of $\Sigma_{\mu,c}$ is positive for $\mu\in [0,1/2]$ and $c\in [2.1,2.1+10^{-6}]$. Furthermore, the tangential Hessian is positive definite.
\end{proposition}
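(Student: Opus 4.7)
The plan is to reduce the convexity claim to checking positive-definiteness of the tangential Hessian $THess$ over a compact region in phase-parameter space, and then verify this pointwise using interval arithmetic.

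First I would recall that for a regular level set $\Sigma = K_{\mu,c}^{-1}(0)$ with outward normal $\nu = \nabla K_{\mu,c}/\|\nabla K_{\mu,c}\|$, the second fundamental form is, up to the positive factor $1/\|\nabla K_{\mu,c}\|$, the restriction of $\mathrm{Hess}(K_{\mu,c})$ to $T\Sigma$. Hence if the tangential Hessian expressed in the quaternionic frame $v_1 = I\nabla K_{\mu,c}$, $v_2 = J\nabla K_{\mu,c}$, $v_3 = K\nabla K_{\mu,c}$ is positive-definite at every point of $\Sigma_{\mu,c}$, the shape operator is positive-definite and the Gauss-Kronecker curvature is strictly positive. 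So it suffices to prove the second assertion of the proposition.

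Next I would use the two a priori lemmas: for $(\mu,c)\in [0,1/2]\times[2.1,2.1+10^{-6}]$ the set $\Sigma_{\mu,c}$ is contained in the compact box $\mathcal{B}=\{|z|^2\le 3/10\}\times\{|w|\le 2\}$. I would cover $[0,1/2]\times[2.1,2.1+10^{-6}]\times\mathcal{B}$ by a finite grid of interval boxes $B_k$, and for each $B_k$ use the CAPD library to compute rigorous enclosures of the entries of $\mathrm{Hess}(K_{\mu,c})$, of the gradient $\nabla K_{\mu,c}$, of the frame vectors $v_1,v_2,v_3$, and ultimately of the $3\times 3$ symmetric matrix $THess$. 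On each $B_k$ that has non-empty intersection with $\Sigma_{\mu,c}$ (which is itself checked by verifying $0\in [K_{\mu,c}(B_k)]$), I would then verify positive-definiteness of $THess$ by checking, via interval Sylvester criterion, that the three leading principal minors are enclosed in $(0,\infty)$. Boxes with $0 \notin [K_{\mu,c}(B_k)]$ can be discarded since they miss $\Sigma_{\mu,c}$.

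The main obstacle will be the collision circle $\{z=0\}$, where $\|\nabla K_{\mu,c}\|$ does not vanish but the enclosures for the frame $v_1,v_2,v_3$ degenerate in a direction-sensitive way and interval inflation of the Sylvester minors typically blows up on boxes that straddle $z=0$. My plan is to treat a small tubular neighborhood $\{|z|\le\delta\}$ of the collision locus separately: the preceding lemma proves positive-definiteness on $\{z=0\}$, and by a quantitative continuity argument, using explicit Lipschitz bounds on the entries of $THess$ in terms of $|z|$, $|w|$, $\mu$ and $c$ obtained from formula \eqref{eq:quad_form_LC_ham } together with the a priori estimates, I can choose $\delta$ so small that the smallest eigenvalue of $THess$ remains bounded below by a validated positive constant on $\{|z|\le\delta\}\cap \Sigma_{\mu,c}$. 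Outside this collar, the interval-arithmetic grid argument in the previous paragraph applies without difficulty. Combining the two regimes yields positive-definiteness of $THess$ everywhere on $\Sigma_{\mu,c}$ for the entire parameter box, and the Gauss-Kronecker statement follows from the identification with the shape operator.
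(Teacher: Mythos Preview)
Your overall strategy is sound and would work, but it differs from the paper in two notable ways.

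First, the paper does \emph{not} verify the full Sylvester criterion on each box. It only computes an interval enclosure of the single scalar $\det THess$ and checks that this is strictly positive on every box meeting $\Sigma_{\mu,c}$. Positive determinant alone does not give positive-definiteness, so the paper closes the gap with a connectedness argument: $\Sigma_{\mu,c}$ is connected, $\det THess>0$ everywhere means no eigenvalue of $THess$ can cross zero, and the preceding lemma establishes positive-definiteness at the collision circle $\{z=0\}$. Hence the signature is constant and equal to $(+,+,+)$ throughout. This is computationally lighter than enclosing three minors and is the real purpose of the collision lemma in the paper's logic.

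Second, your special treatment of a collar $\{|z|\le\delta\}$ around the collision locus is unnecessary. In Levi-Civita coordinates the Hamiltonian $K_{\mu,c}$ is smooth at $z=0$ (the only singularity is at $2z^2=1$, far from the component in question), so $\nabla K_{\mu,c}$, $\mathrm{Hess}(K_{\mu,c})$, the frame $v_i$, and hence $THess$ are all perfectly regular there; the interval enclosures do not blow up. The paper simply includes boxes containing $z=0$ in the same covering and evaluates $\det THess$ on them without incident. Your Lipschitz-collar argument is not wrong, just superfluous, and it replaces a routine interval evaluation with a more delicate analytic estimate.

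In short: your route (Sylvester on all boxes, plus an analytic collar) proves the result directly but does more work; the paper's route (determinant only, plus connectedness from the collision lemma) is leaner and explains why that lemma is stated at all.
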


\begin{proof}
We cover the parameter domain $[0,1/2]\times [2.1,2.1+10^{-6}]$ by boxes of the form $[\mu_i,\mu_{i+1}] \times [2.1,2.1+10^{-6}]$. For each of the parameter boxes, we produce a covering of $\Sigma_{\mu,c}$ and verify the curvature condition.

Because of symmetry, it suffices to verify this curvature condition for $z_1\geq 0$. Also, by the above lemmas, we know that $\Sigma_{\mu,c}$ is contained in the hypercube $[-6/10,6/10] \times [-6/10, 6/10] \times [-2,2] \times [-2,2]$. We then proceed as follows:
\begin{itemize}
    \item We cover the hypercube $[0,6/10] \times [-6/10, 6/10] \times [-2,2] \times [-2,2]$ by boxes. For $\mu<0.1$ we can choose boxes with width $1/200$. For larger $\mu$ we need to shrink the width to $1/1000$. 
    \item For each box, check whether $2|z|^2> \frac{3-2\mu}{5}$. If so, then discard this box.
    \item For each box $B$, compute whether $K_{\mu,c}(B) <0$ or $K_{\mu,c}(B) >0$ using enclosures. Discard boxes for which this is the case.
    \item For each remaining box $B$, we have $0 \in K_{\mu,c}(B)$, and we have obtained a covering of $\Sigma_{\mu,c}$ (by mirroring in $z_1$).
    \item Form the tangential Hessian $THess$ as above and compute an enclosure of its determinant using interval arithmetic. 
\end{itemize}
The computer-assisted argument shows that $\det THess >0$ for all $(z,w)\in \Sigma_{\mu,c}$. This proves the first statement. Now observe that $\Sigma_{\mu,c}$ is connected. By the previous lemma $THess$ is positive definite at the collision locus. Since $\det THess>0$, there are no sign changes in any of the eigenvalues, so it follows that $THess$ is positive definite everywhere. This proves the proposition.    
\end{proof}

\begin{proof}[Proof of Theorem~\ref{thm:gss}]
    By the previous proposition and Lemma~12.1.2 of \cite{frauenfelder_restricted_2018} we know that the compact component of $\Sigma_{\mu,c}$ is convex. 
    It is known that convexity implies dynamical convexity, see for example \cite{hofer_wysocki_zehnder_1998} or Theorem~12.2.1 in \cite{frauenfelder_restricted_2018}.
    Together with Theorem~\ref{thm:small_action_unknotted_and_sl} and Hryniewicz's theorem~\ref{thm:Hryniewicz_gss}, the conclusion follows.
\end{proof}

\section*{Acknowledgements}
We are grateful to the referees for their helpful suggestions. Chankyu Joung and Otto van Koert were supported by the National Research Foundation of Korea (NRF), grant number (MSIT) (2023R1A2C1005562), funded by the Korea government.

\printbibliography

\end{document}